\documentclass[11pt]{amsart}
\usepackage{amsmath}
\usepackage{amsfonts}
\usepackage{amssymb}
\newtheorem{thm}{Theorem}[section]

\newtheorem{cor}[thm]{Corollary}

\newtheorem{remark}[thm]{Remark}
\newtheorem{lemma}[thm]{Lemma}

\newtheorem{defn}[thm]{Definition}

\newcommand{\bb}[1]{\mathbb{#1}}
\newcommand{\cl}[1]{\mathcal{#1}}


\begin{document}

\title[Invariant spaces under $S^n$]{A Finite Multiplicity Helson-Lowdenslager-De Branges Theorem}

\author[S.~Lata]{Sneh Lata}
\address{Department of Mathematics, University of Houston,
Houston, Texas 77204-3476, U.S.A.}
\email{snehlata@math.uh.edu}

\author[M.~Mittal]{Meghna Mittal}
\address{Department of Mathematics, University of Houston,
Houston, Texas 77204-3476, U.S.A.}
\email{mittal@math.uh.edu}

\author[D.~Singh]{Dinesh Singh}
\address{Department of Mathematics, University of Delhi,
Delhi 110007, INDIA}
\email{dineshsingh1@gmail.com}

\keywords{Invariant subspaces, Helson-Lowdenslager Theorem, de Branges Theorem, Lebesgue spaces, finite multiplicity, 
shift part, multiplication by $z^n$}
\subjclass[2000]{Primary 47A15}
\date{September 23, 2009}

\begin{abstract}
This paper proves two theorems. The first of these
simplifies and lends clarity to the previous characterizations of the
invariant subspaces of $S$, the operator of multiplication by the coordinate
function $z$, on $L^2(\mathbb{T};\mathbb{C}^n)$, where $\mathbb{T}$ is the unit circle,
by characterizing the invariant
subspaces of $S^n$ on scalar valued $L^p$ ($0<p\le\infty$) thereby
eliminating range functions and partial isometries. It also gives
precise conditions as to when the operator shall be a pure shift and
describes the precise nature of the wandering vectors and the doubly
invariant subspaces. The second theorem describes the contractively
contained Hilbert spaces in $L^p$ that are simply invariant under $S^n$
thereby generalizing the first theorem.\end{abstract}

\maketitle


\section{Introduction}
The Helson-Lowdenslager invariant subspace theorem, \cite{HL}, describes the simply 
invariant subspaces of the operator $S$ of multiplication by the coordinate function 
$z$ on $L^2$ thereby generalizing in a highly non-trivial fashion Beurling's famous 
invariant subspace theorem \cite{Beu}. The theorem and the method invented by Helson 
and Lowdenslager are important for a variety of reasons and we refer to \cite{DPRS}, \cite{Gam},  
\cite{Hel1}, \cite{Hof2}, \cite{PS}, \cite{PS1} for such details. The doubly 
invariant subspaces of $S$ are described by Wiener's theorem. We refer to \cite{Hel2} 
for details. Between them, these two theorems describe the class of all invariant subspaces 
for $S$ on scalar valued $L^2$ of the unit circle. The description of such invariant 
subspaces of $S$ has been generalized and extended in many directions and we refer 
to \cite{Gam}, \cite{Hel2}, \cite{Hel3}, \cite{PS}, \cite{Red}, \cite{RaRo}. We also refer 
to \cite{Sin} for several other useful references. Among these fruitful and interesting 
generalizations is the extension of these theorems to the context of functions on 
the unit circle taking values in an arbitrary Hilbert space H. Such a generalization 
can be found among other places in Helson's book \cite{Hel2} and in \cite{RaRo} as also 
in \cite{Ohn}, \cite{San}, \cite{Sri} . The tenor of these generalizations involves the 
description of the decomposition of these invariant subspaces into the direct sum of 
two invariant subspaces one of which is the shift part and the other being doubly 
invariant. Unlike the scalar case, such generalizations do point out the fact that 
the two subspaces-the shift part and the doubly invariant can co-exist simultaneously 
to together add up to the entire invariant subspace which shall remain simply invariant. 

The purpose of this paper is to prove two theorems. The first of these looks anew at 
the solved problem of characterizing the invariant subspaces of $ S$ on the Lebesgue 
space $L^2$ whose functions take values in a finite dimensional Hilbert space $H.$ 
More precisely, unlike the previous characterizations, we characterize-without taking recourse to vectorial concepts such 
as that of a range function and without stepping outside the scalar situation-the 
most general form of an invariant subspace of the operator of multiplication by $z^n$ 
in the classical Lebesgue space $L^p, \ 0< p\le \infty,$ on the unit circle. This includes 
describing the precise nature of the shift part of a simply invariant subspace by 
characterizing in an explicit and simple form the nature of its wandering vectors and 
by giving exact conditions under which a simply invariant subspace cannot contain a 
reducing subspace. We also show when the reducing part of a simply invariant subspace 
will not be zero and in that event we describe the precise nature of the reducing part 
in terms of the wandering vectors of the shift part.  

The second theorem, in this paper, characterizes the class of all Hilbert spaces that 
are contractively contained in $L^q, \ 0< q \le 2,$ and on which the operator of multiplication 
by $z^n$ acts isometrically. By imposing a natural condition, we give a precise description 
of such spaces which also generalizes our first theorem as well as the main theorems in 
\cite{PS} and \cite{Red}, \cite{SS}, \cite[Page 26]{Hel2} and a theorem of de Branges \cite{dBra}.\\

 This paper is organized as follows. In section 2, we give some of the basic notations, 
 definitions and the statement of the main results. In Section 3, we prove some of the preliminary 
 results followed by the proof of the first main theorem. Section 4 deals with some basic results 
 concerning De Branges spaces and it then presents the proof of the second main theorem. Also, we prove a direct 
 generalization of the characterization, as given in \cite{PS}, \cite{Red}, of those Hilbert spaces which are boundedly contained in $L^p$ 
 and are doubly invariant under $S^n$ Of course while we deal with the case of a general positive integer these papers deal with the case $n=1.$  
 We would like to mention that the results proved in a De Branges setting give an alternate 
 characterization theorem for the classical case. It would be interesting to compare them.
 Finally towards the end of this section we give some final remarks and results which follow as
 consequences of the results proved earlier in the same section.

 \section{Notations, Terminology and Statement of the Main Results}
 We shall denote the unit circle in the complex plane by $\bb T$
 and the open unit disk by $\bb D$. For any $0 < p \le \infty, \ L^p$ and $H^p$ 
 shall denote the familiar Lebesgue and Hardy spaces respectively on $\bb T$.
 Recall that $ L^p$ and consequently also $H^p,$ are Banach spaces 
 with the usual norm which is given by $\|f\|_p=(\frac{1}{2\pi}\int_{0}^{2\pi}|f|^pdm)^{1/p}$ 
 when $0< p < \infty$ and $\|f\|_{\infty}=esssup|f|$ when $p=\infty.$
 In particular, when $p=2$ these are Hilbert spaces with the inner product given by 
 $<f,g>= \frac{1}{2\pi}\int_0^{2\pi}f\overline{g}dm.$
 For a fixed positive integer n, $L^p(z^n)$ shall denote the closed 
 linear span of $\{z^{kn}: k \in \bb Z\}$ in $L^p$ and $H^p(z^n)$ will stand for $L^p(z^n) \cap H^p.$
 
 A Hilbert space $\cl M$ is said to be {\em  boundedly contained} in a Hilbert space
 $\cl H$ if $\cl M$ is a vector subspace of $\cl H$ (in the algebraic sense) 
 and if the inclusion map, \ i.e. $\|x\|_{\cl H}\le C \|x\|_{\cl M}$ for all z in $\cl M$
 and some constant $C.$ When $C=1,$ we say that $\cl H$ is {\em contractively contained} in $\cl M.$
 For further details on all of the above we refer to \cite{Hel1},\ \cite{HL} and \cite{Hof1}.\\

 For a fixed positive integer n, $S^n$ shall denote the operator of multiplication 
 by $z^n$ where $z$ is the coordinate function on $\bb T.$
 It is easy to see that $S^n$ is an isometry on $L^2$ and hence also on $H^2.$
 A closed subspace $\cl M$ of $L^p, \ 0 < p \le \infty$ ( weak star closed when $p= \infty$) 
 is said to be {\em simply invariant} under $S^n$
 if $S^n(\cl M) \subsetneq \cl M$ and {\em doubly invariant} if $S^n(\cl M)=\cl M.$
 For our purposes, a Hilbert space $\cl M$ which is boundedly contained in $L^p$
 will be said to be {\em simply} ({\em doubly}) {\em invariant} if $S^n(\cl M)\subsetneq \cl M \ (S^n(\cl M)=\cl M).$\\

 If T is an operator on some Hilbert space $\cl K,$ then we let $\cl R(T)$ denote 
 the range of $T$, which is a (not necessarily closed) subspace of $\cl K.$
 However, if we endow $\cl R(T)$ with the norm, $\|h\|_{\cl R(T)}=\inf\{\|k\|_{\cl K}: Tk=h\},$ 
 then $\cl R(T)$ is a Hilbert space in this norm called the {\em range space} of $T$
 and it is boundedly contained in $\cl K.$ When $T$ is a contraction then the range 
 space of $T$ is contractively contained in $\cl K.$ \\

 Before proceeding with some more terminology we note that we can write 
 \[ L^p = L^p(z^n)\oplus z L^p(z^n) \oplus \cdots \oplus z^{n-1}L^p(z^n).\] 
 Obviously, by the above direct sum we mean the Hilbert space direct sum when $p=2$ and algebraic direct sum otherwise.
 The following definition concerns $L^{\infty}$ functions. It is a direct analogue of 
 the definition of n-inner functions in $H^{\infty}$ introduced by the third author and 
 Thukral in \cite{ST}. 

 \begin{defn} A function $\phi \in L^{\infty}$ is called n-unimodular if $ \sum_{i=1}^{n}|\phi_{i}|^2=1$ a.e.
  where $\phi= \sum_{i=1}^{n}z^{i-1}\phi_{i}$ with $\phi_{i} \in L^{\infty}(z^n) \ \forall \ i.$
 \end{defn}

 Lastly, we define an operator on $L^2$ which is an 
 analogue of the multiplication operator usually denoted by $M_{\phi}$ where $\phi \in L^{\infty}.$
 Take a matrix $\psi=(\psi_{ij}) \in M_{mn}(L^{\infty})$ and 
 define an operator
 on $L^2$ via $A_{\psi}:L^2 \to L^2$ by \[A_{\psi}(f)=\sum_{i=1}^m z^{i-1}\left(\sum_{j=1}^n\psi_{ij}(z)f_j\right)\] 
 where $f=\sum_{i=1}^nz^{i-1}f_i$ with $f_i\in L^2(z^n).$  

\noindent It is easy to see that $A_{\psi}$ is a bounded operator on $L^2$ and $n=m=1$ 
 gives us the usual multiplication operator on $L^2.$ Note that the kernel of the 
 operator $A_{\psi}$ is a doubly invariant subspace of $L^2$ under $S^n.$ 
 We adopt the notation, $\cl K^{\psi}_{\cl M}$ for the kernel of the operator 
 $A_{\psi}$ restricted to the subspace $\cl M$ of $L^2.$ \\
  By $\overline{\psi}$ we mean the matrix $(\overline{\psi_{ij}}),$ where $\overline{\psi_{ij}}$ is 
 the complex conjugate of the function $\psi_{ij}.$

 For our purposes, $\cl K^{\psi}_{\cl M}$ is specific in nature. 
 To see this, let  $\{\psi_1, \cdots, \psi_r\}$ be a set of n-unimodular functions in $L^{\infty}$ 
 and let $\psi=(\psi_{ij}),$ where $\psi_i= \sum_{j=1}^nz^{j-1}\psi_{ij}, \ \psi_{ij}\in L^{\infty}(z^n).$
 Then it can be easily seen that 
 \[\cl K^{\psi}_{\cl M}= \{f \in \cl M: \sum_{j=1}^n\psi_{ij}f_j=0 \  \ \forall \ 1 \le i\le r\},\]
where $f=\sum_{j=1}^nz^{j-1}f_j, \ f_j\in L^2(z^n).$

 If we let $\cl M=L^2$ and replace $\psi$ by $\overline{\psi}$ then 
 as noted before, $\cl K^{\overline{\psi}}_{L^2}$ is the doubly invariant subspace of $L^2$ under $S^n.$
 In fact, it is the maximal invariant subspace of $L^2$ that is 
 orthogonal to $\psi_i H^2(z^n) \ \forall \ 1\le i \le n.$\\
 
Keeping this motivation in mind, we define \[\cl K^{\psi}_{\cl M(p)}= \{f \in \cl M: \sum_{j=1}^n\psi_{ij}f_j=0 \  \ \forall \ 1 \le i\le r\},\] where $f=\sum_{j=1}^nz^{j-1}f_j, \ f_j\in L^p(z^n)$ and  $\cl M$ is any subspace of $ L^p.$\\

 We are now in a position to give the statement of our main results.\\

\noindent {\bf Theorem A.}\label{one} {\it  Let $\cl M$ be a simply
 invariant subspace of $L^p, \ 0< p \le \infty$ under $S^n$. 
 Then the most general form that $\cl M$ can assume is 
\[\cl M=\sum_{i=1}^r \oplus \phi_iH^p(z^n) \oplus \cl K_{\cl M}^{\overline{\phi}}\]
 where 
\begin{itemize}
 \item[(i)] $r \le n, $
 \item[(ii)] each $\phi_i$ is a n-unimodular function not vanishing on any set of positive measure,
 \item[(iii)]$\|\phi_if\|_p=\|f\|_p $ for every $ f \in H^p(z^n),$ and 
 \item[(iv)] $ \phi=(\phi_{ij}) \in M_n(L^{\infty}(z^n)), \ \phi_i= \sum_{j=1}^nz^{j-1}\phi_{ij}.$
\end{itemize}

  \indent Moreover, if $r=n$ then $\cl K_{\cl M}^{\overline{\phi}}=\{0\}$ and if $r < n$ then 
 there exists infinitely many nonzero doubly invariant subspaces of 
 $\cl K^{\overline{\phi}}_{L^2}$ which when appended to $\sum_{i=1}^r \oplus \phi_iH^2(z^n)$
 form a simply invariant subspace of $L^p.$ }
\\

 The second of our two main theorems shows that under similar conditions as considered in \cite{PS} 
 and \cite{Red}, there are non-trivial Hilbert spaces contractively contained in $L^p,$ for $ 0< p \le 2.$ 
 The analogous result for the case $2<p \le \infty$ as obtained by the third author and 
 Paulsen in \cite{PS} can be found in a later section which asserts that there is no 
 non-trivial simply invariant Hilbert space under $S^n$ which is  contractively contained in $L^p.$ We shall adopt the notation $\frac{2q}{2-q} =\infty$ when $q=2.$\\

 \noindent{\bf Theorem B.}  {\it Let $\cl M \neq 0$ be a simply invariant Hilbert
 space contractively contained in $L^q, \ 0 < q \le 2$ and on which $S^n$ 
 acts isometrically. Further, suppose that  $\exists \ p, \ 2\le p\le \frac{2q}{2-q}  $ and a  $\delta >0 $
 such that \[ \|f\|_\cl M \leq\ \delta\|f\|_p    \ \ \ \ \ \ \ \  \forall \ f \in \cl M \cap L^p .\]
 Then there exists an orthonormal set, $\{\psi_i\}_{i=1}^r \subseteq \cl M \cap L^\infty, \ r \le n,$
 each of which is nonzero a.e. such that 
   \[\cl M = \sum_{i=1}^r\oplus\psi_i H^2(z^n)\oplus  \cl R(M_{h}A_{\phi}),\]
where
\begin{enumerate}
 \item[(i)] $\phi \in M_n(L^{\infty}(z^n))$ and $g \in L^{2q/(2-q)}$ is a strictly positive function such that
\begin{equation*} 
h=\left\{\begin{array}{rl} g  & {\rm when } \ 0 <q<2 \\
1 & {\rm  when } \ q=2
\end{array} \right.
\end{equation*}
 \item[(ii)]  $\|\psi_i f\|_{\cl M}= \|f\|_2 $ for every $ f \in H^2(z^n)$ and $ 1 \le i \le r,$
 \item[(iii)] $(\sum_{j=1}^r|\psi_{ij}|)^{-1} \in L^s$ and $ \  \|(\sum_{j=1}^r|\psi_{ij}|)^{-1}\|_s \le \delta   \ \ {\rm  where } \
  \psi_i= \sum_{j=1}^r z^{j-1}\psi_{ij}  \\  {\rm and \ }
 s=\left\{\begin{array}{rl} \frac{2p}{p-2} & {\rm if} \ 2 < p \le \frac{2q}{2-q} \\
 \infty & {\rm  if } \ p=2
 \end{array} \right.$  
 \end{enumerate}  }
\indent Moreover, if $r=n$ then $\cl R(M_{h}A_{\phi})=\{0\}.$\\

\section{Proof of Theorem A.}

We begin this section with some of the preliminary results required to give the proof of the first main theorem. 
\begin{lemma}\label{ortho}  Let $\cl M$ be a subspace of $L^2$ which 
is simply invariant under $S^n.$ If we denote $\cl N=\cl M \ominus S^n\cl M,$
then
\begin{enumerate}
\item[(i)]\label{orthonormal} $\{\phi z^{kn}\}_{k\in\bb Z}$ is an orthonormal set
for each unit vector $\phi \in \cl N,$
\item[(ii)] if a unit vector $\phi \in \cl N$, then $\phi$ is n-unimodular,
\item[(iii)] $ \cl N \subseteq  L^\infty,$
\item[(iv)] if $\{\phi,\psi\}$ is an orthogonal set in $\cl N$,
then $\sum_{j=1}^{n}\phi_j\bar{\psi_j}= 0$ a.e., 
where $ \phi= \sum_{j=1}^{n}z^{j-1}\phi_j$ and $\psi= \sum_{j=1}^{n}z^{j-1}\psi_j,$
\item[(v)] elements of $\cl N$ cannot vanish on sets of positive measure.
\end{enumerate}
\end{lemma}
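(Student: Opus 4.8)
The plan is to obtain parts (i), (ii) and (iv) from a single identity and to read off (iii) as a free consequence, leaving (v) as the real work. First I would record the basic consequence of the wandering hypothesis: if $\phi,\psi\in\cl N$ then $\langle\phi,z^{kn}\psi\rangle=0$ for every $k\neq0$. Indeed $\phi,\psi\perp S^n\cl M$, and for $k\ge1$ we have $z^{kn}\psi\in S^n\cl M$; for $k\le-1$ I would move the unimodular factor $z^{kn}$ across the inner product (multiplication by $z^{kn}$ is unitary on $L^2$), writing $\langle\phi,z^{kn}\psi\rangle=\overline{\langle\psi,z^{-kn}\phi\rangle}$ with $-k\ge1$, to reduce to the previous case with the roles of $\phi,\psi$ reversed. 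Since $\langle\phi,z^{0}\psi\rangle=\langle\phi,\psi\rangle$, this says precisely that
\[ \frac{1}{2\pi}\int \phi\,\overline{\psi}\,z^{-kn}\,dm=\langle\phi,\psi\rangle\,\delta_{k0}\qquad(k\in\bb Z). \]

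Next I would feed this into the decomposition $\phi=\sum_{j=1}^n z^{j-1}\phi_j$, $\psi=\sum_{j=1}^n z^{j-1}\psi_j$ with $\phi_j,\psi_j\in L^2(z^n)$. Let $\omega=e^{2\pi i/n}$ and let $T$ be the averaging operator $(Tf)(z)=\tfrac1n\sum_{l=0}^{n-1}f(\omega^l z)$, which is the orthogonal projection of $L^2$ onto $L^2(z^n)$ and maps $L^1$ into $L^1(z^n)$. Expanding $\phi\overline{\psi}=\sum_{i,j}z^{i-j}\phi_i\overline{\psi_j}$ and noting that each $\phi_i\overline{\psi_j}\in L^1(z^n)$ while $1\le|i-j|\le n-1$ when $i\neq j$, every off-diagonal term is annihilated by $T$, so $T(\phi\overline{\psi})=\sum_{j=1}^n\phi_j\overline{\psi_j}$. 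On the other hand the displayed identity says that the only nonzero Fourier coefficient of $\phi\overline{\psi}$ at a multiple of $n$ is the constant term $\langle\phi,\psi\rangle$, i.e. $T(\phi\overline{\psi})=\langle\phi,\psi\rangle$. Hence
\[ \sum_{j=1}^n \phi_j\overline{\psi_j}=\langle\phi,\psi\rangle\quad\text{a.e.}\qquad(\phi,\psi\in\cl N). \]

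From here (i)--(iv) fall out. Taking $\psi=\phi$ a unit vector in the first display gives $\langle z^{kn}\phi,z^{ln}\phi\rangle=\langle\phi,z^{(l-k)n}\phi\rangle=\delta_{kl}$, which is (i). The second display with $\psi=\phi$ a unit vector gives $\sum_j|\phi_j|^2=1$ a.e.; in particular $|\phi_j|\le1$ a.e., so each $\phi_j\in L^\infty(z^n)$ and $\phi\in L^\infty$, proving (ii). For a general $\phi\in\cl N$ the same computation (or rescaling a unit vector) yields $\sum_j|\phi_j|^2=\|\phi\|_2^2$ a.e., whence $|\phi_j|\le\|\phi\|_2$ and $\cl N\subseteq L^\infty$, which is (iii). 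Finally, if $\{\phi,\psi\}$ is orthogonal then $\langle\phi,\psi\rangle=0$ and the second display becomes $\sum_j\phi_j\overline{\psi_j}=0$ a.e., which is (iv).

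The substantial point is (v), and I would attempt it by contradiction: assume a unit $\phi\in\cl N$ vanishes on a set $E$ with $m(E)>0$ and try to contradict the simple invariance of $\cl M$. The hard part is that everything extracted above about a single wandering vector is encoded in the orbit identity $\tfrac1n\sum_{l=0}^{n-1}|\phi(\omega^l z)|^2=1$ a.e. (this is $T(|\phi|^2)=1$), which only prevents $\phi$ from vanishing on a full $\langle\omega\rangle$-orbit, not on a partial one; so $n$-unimodularity by itself does not give (v) (for $n=2$ one can prescribe components with $|\phi_1|=|\phi_2|=1/\sqrt2$ and $\phi_1=-z\phi_2$ on a set, forcing $\phi=0$ there while keeping $\sum_j|\phi_j|^2=1$). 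Consequently the argument must exploit the ambient structure of $\cl M\subseteq L^2$ beyond the single relation defining $\cl N$; concretely I would try to use $\chi_E$ (the projection onto $\{\phi=0\}$) to split off a reducing part incompatible with $\phi\in\cl M\ominus S^n\cl M$. Closing this gap between orbit-nonvanishing and pointwise nonvanishing is exactly where I expect the difficulty to lie, and I take it to be the crux of the lemma; by contrast the scalar case $n=1$ is trivial, since there $\phi$ is unimodular.
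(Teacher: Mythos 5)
Your treatment of (i)--(iv) is correct and follows essentially the same route as the paper: the paper likewise gets $\langle\phi,z^{kn}\psi\rangle=\langle\phi,\psi\rangle\delta_{k0}$ from the Wold decomposition and then concludes $\sum_{j}\phi_j\overline{\psi_j}=\langle\phi,\psi\rangle$ a.e.\ by noting that this sum lies in $L^1(z^n)$ and has all Fourier coefficients at multiples of $n$ equal to zero except possibly the constant term; your averaging projection $T$ is just a cleaner packaging of that computation. The gap in your proposal is therefore exactly the one you flag yourself: (v) is asserted but not proved, so the proposal is incomplete as a proof of the lemma.

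That said, your diagnosis of where the difficulty sits is not merely reasonable --- it identifies a real defect. The paper's proof of (v) sets $A=\{\phi=0\}$, $B=\{z^n:z\in A\}$, and builds an outer $h_m\in H^{\infty}(z^n)$ with $|h_m|=e^{m}$ on the fiber $P=\{z:z^n\in B\}=\bigcup_{j=0}^{n-1}\alpha^jA$ (where $\alpha=e^{2\pi i/n}$) and $|h_m|=e$ off it, and then asserts that $\|\phi h_m\|_2$ is bounded independently of $m$. That boundedness requires $\phi$ to vanish on all of $P$, not just on $A$; since $P$ is rotation-invariant, $n$-unimodularity gives $\tfrac{1}{2\pi}\int_P|\phi|^2\,dm=m(P)>0$, so in fact $\|\phi h_m\|_2^2\ge e^{2m}m(P)$ grows exactly like $\|h_m\|_2^2$ and no contradiction is reached. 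This is precisely the orbit-versus-pointwise gap you describe, and your sketched example can be completed into a genuine counterexample for $n\ge 2$: take $E$ in the open upper half-circle with $m(E)>0$, put $\phi=\sqrt{2}\,\chi_{-E}+\chi_{(E\cup(-E))^c}$, and let $\cl M=\phi H^2(z^2)$. Then $|\phi(z)|^2+|\phi(-z)|^2=2$ a.e., so $h\mapsto\phi h$ is an isometry of $H^2(z^2)$ onto $\cl M$; hence $\cl M$ is closed and simply invariant under $S^2$, $\cl N=\cl M\ominus S^2\cl M=\bb C\phi$, and $\phi$ vanishes on $E$. So (v) holds only for $n=1$ (where $P=A$ and the paper's argument is the classical Helson--Lowdenslager one); for $n\ge 2$ the statement itself fails, and the nonvanishing claim in Theorem A(ii) inherits the same problem. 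You were right to treat this part as the crux rather than paper over it.
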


\begin{proof} Given that $\cl M= \sum_{k=0}^{\infty}\oplus S^{kn}(\cl N) \bigcap_{k\ge 0}S^{kn}\cl M$
(by the decomposition of isometries as in \cite[page 109]{Hof1}) the conclusion of (i) is immediate.
In order to prove (ii), we let $\phi\in \cl N, \ \|\phi\|_2 = 1.$ 
Then by (i), $\phi \perp \phi z^{kn} \ \forall \ k \neq 0.$ This yields 
that $\tfrac{1}{2\pi}{\int_0^{2\pi} |\phi|^2\*z^{kn}} = 0 \  \forall \ k \neq 0.$ 
If we write $\phi= \sum_{j=0}^{n-1}z^j\phi_j $ with $\phi_j \in H^2(z^n)$ 
then $\sum_{j=1}^{n}|\phi_j|^2=\delta$, for some constant $\delta.$ 
It is easy to see that $\delta=1$ since $\|\phi\|_{2}=1.$
This clearly shows that $\phi_i \in L^{\infty} \ \forall \  i$ and hence also proves (iii).\\
\indent To prove (iv), let $\{\phi, \psi\}$ be an orthonormal set in $\cl N.$ 
Then the decomposition of isometries gives us that $\left<\phi, z^{kn}\psi \right>=0 \ \forall \ k \in \bb Z.$
Let $\phi=\sum_{j=1}^{n} z^{j-1}\phi_j $ and $ \psi=\sum_{j=1}^{n} z^{j-1}\psi_j ,$ 
where as usual $\phi_j, \ \psi_j$ are in $H^2(z^n),$ then 
this yields that $\left<\sum_{i=1}^{n}\phi_i\bar{\psi_i}, z^{kn} \right> =0 \ \forall \ k.$ 
Also, note that $\left<\sum_{i=1}^{n}\phi_i\bar{\psi_i}, z^{2n-1} \right> =0 \ \forall \ n \in \bb Z,$ 
since $\phi_i$ and $\psi_i$ are even functions. \\
\indent Finally we prove (v) by contradiction. Take a non 
zero $\phi$ in $\cl N$ and suppose there exists a set of positive measure $A$ in 
$\bb T$ such that $\phi$ vanishes on $A$. \\
Define  
\begin{equation*}
k_m=\left\{\begin{array}{rl} m & {\rm on} \ B\\
1 & {\rm on} \ B^c
\end{array} \right.
\end{equation*}
where $B=\{z^n :z \in A \}.$ Note that $k_m \in L^{\infty}$ for each $m.$ \\
Let $t_m(z)=k_m(z^n)$ and  $h_m={\rm exp}(t_m+i\tilde{t}_m),$ 
where $\tilde{t}_m$ denotes the harmonic conjugate of $t_m,$ 
so that $h_m\in H^{\infty}(z^n)$ for all $m.$ 
Let $ h_m = \sum_{k=0}^{\infty}\alpha_kz^{kn}$ then by (i) we get that 
\[\| \phi \sum_{k=0}^l \alpha_kz^{kn}\|_2  = (\sum_{k=0}^l|\alpha_k\ |^2)^{1/2}  = \|\sum_{k=0}^l\alpha_k z^{kn}\|_2. \]
Note that $\phi \sum_{k=0}^l \alpha_kz^{kn}\in \cl M$ and is Cauchy in $L^2.$
This shows that $\phi h_m\in \cl M$ and $\| \phi h_m\|_2 = \|h_m\|_2.$ 
By the definition of $h_m,$ we get that $\|\phi h_m\|_2 < K$ for some constant $K$ 
independent of $m$ and $\|h_m\|_2 > \sqrt{{\rm exp}(2m) m(A)}$ which is indeed a contradiction.
This completes the proof of the lemma.
\end{proof}

\begin{lemma}\label{matrix}Let $\{\phi_1,\dots,\phi_n\}$ be a set of n-unimodular 
functions such that $\phi_i\perp\phi_jH^2(z^n)$ for each $i \ne j.$ If we write 
$\phi_i(z)=\sum_{j=1}^nz^{j-1}\phi_{ij}(z)$ with $\phi_{ij}\in L^2(z^n) \ \forall \ i,\ j \in \{1, \cdots, n\},$
and define 
\begin{displaymath}
A_r(z)=\left( \begin{array}{ccc} \phi_{11}(z)& \cdots & \phi_{1n}(z)\\
\vdots &\vdots&\vdots\\
\phi_{j1}(z)& \cdots& \phi_{jn}(z)\\
\vdots &\vdots&\vdots\\
\phi_{r1}(z)& \cdots& \phi_{rn}(z)
\end{array}\right)
\end{displaymath}
 then
\begin{itemize}
\item[(i)] $A_r(z)A_r(z)^*= I_r$ for almost all $z.$ In particular, 
if $r=n$ then $A_n(z)$ is unitary for almost all $z.$ 
\item[(ii)] there exists no nonzero $\psi \in L^2$ such that 
$\psi \perp\phi_i L^2(z^n)$ for each $i.$ 
\end{itemize}
\end{lemma}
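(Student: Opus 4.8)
The plan is to prove both parts by translating the given $L^2$-orthogonality relations into pointwise identities among the scalar component functions $\phi_{ij}$, and then reading these off as statements about the matrix $A_r(z)$. For part (i), I would compute the entries of the product $A_r(z)A_r(z)^*$, whose $(i,j)$ entry is $\sum_{a=1}^n \phi_{ia}(z)\overline{\phi_{ja}(z)}$. The diagonal entries are immediate: since each $\phi_i$ is $n$-unimodular, $\sum_{a=1}^n |\phi_{ia}|^2 = 1$ a.e., so $(A_rA_r^*)_{ii}=1$. For the off-diagonal entries I would exploit the hypothesis $\phi_i\perp\phi_jH^2(z^n)$. The key observation is a frequency bookkeeping modulo $n$: writing $\phi_{ia}\in L^2(z^n)$ as a series in $z^{mn}$, the function $z^{a-1}\phi_{ia}$ has all of its Fourier support in the residue class $a-1\pmod n$, and the extra factor $z^{kn}$ does not change residues. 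Hence in expanding $\langle\phi_i,\phi_jz^{kn}\rangle$ only the terms with matching index $a=b$ survive, giving $\langle\phi_i,\phi_jz^{kn}\rangle = \langle g_{ij}, z^{kn}\rangle$, where $g_{ij}:=\sum_{a=1}^n\phi_{ia}\overline{\phi_{ja}}\in L^1(z^n)$.

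Therefore $\phi_i\perp\phi_jH^2(z^n)$ says exactly that the Fourier coefficients of $g_{ij}$ at all nonnegative multiples of $n$ vanish. To kill the remaining (negative) coefficients I would use the symmetric hypothesis $\phi_j\perp\phi_iH^2(z^n)$ together with $g_{ji}=\overline{g_{ij}}$, which forces the coefficients at nonpositive multiples of $n$ to vanish as well; hence $g_{ij}=0$ a.e. for $i\ne j$. This gives $A_r(z)A_r(z)^*=I_r$ a.e., and when $r=n$ the matrix $A_n(z)$ is square with $A_n A_n^*=I_n$, hence unitary a.e.

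For part (ii), I would argue by contradiction. Suppose $\psi\in L^2$, $\psi\ne 0$, satisfies $\psi\perp\phi_iL^2(z^n)$ for every $i$; writing $\psi=\sum_{b=1}^n z^{b-1}\psi_b$ with $\psi_b\in L^2(z^n)$, the same mod-$n$ computation (now over all $k\in\bb Z$, since $L^2(z^n)$ is spanned by $\{z^{kn}\}_{k\in\bb Z}$) yields $\sum_{b=1}^n\psi_b\overline{\phi_{ib}}=0$ a.e. for every $i$. In matrix form this reads $\overline{A_n(z)}\,(\psi_1(z),\dots,\psi_n(z))^{T}=0$ for almost all $z$. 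But by part (i) (applied with $r=n$) the matrix $A_n(z)$, and hence its entrywise conjugate $\overline{A_n(z)}$, is unitary and in particular invertible for almost all $z$; thus $\psi_b=0$ a.e. for every $b$, so $\psi=0$, a contradiction.

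The main obstacle is the first reduction: carefully carrying out the modulo-$n$ frequency bookkeeping to pass from the subspace orthogonality relations to the pointwise scalar identities, and in particular arranging that both the nonnegative and the negative Fourier coefficients of $g_{ij}$ are eliminated (which is precisely why the hypothesis is stated symmetrically in $i$ and $j$). Once these identities are in hand, both conclusions follow by routine linear algebra applied pointwise a.e.
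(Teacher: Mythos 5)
Your proposal is correct and follows essentially the same route as the paper: reduce the orthogonality relations $\phi_i\perp\phi_j z^{kn}$ to the pointwise identities $\sum_a\phi_{ia}\overline{\phi_{ja}}=\delta_{ij}$ a.e.\ via the mod-$n$ Fourier support argument (the paper invokes "the logic of Lemma 3.1(iv)" for this), and then deduce (ii) by writing the orthogonality to $\phi_iL^2(z^n)$ as a pointwise linear system annihilated by the a.e.-unitary matrix from (i). Your explicit use of the symmetry of the hypothesis in $i$ and $j$ to kill the negative-index coefficients is a detail the paper leaves implicit, but it is the same argument.
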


\begin{proof}
It follows by the logic used to deduce Lemma~\ref{ortho}(iv) that 
$\sum_{l=1}^n\phi_{il}\overline{\phi_{jl}}=0 \ a.e. \ \forall \ i \ne j.$
This together with the hypothesis that each $\phi_i$ is n-unimodular forces $A_r(z)A_r(z)^*=I_{r} \ a.e.$ 
and hence completes the proof of (i). \\
\indent Assume that $\psi \perp \phi_iL^2(z^n)$ for each $i$ and let 
$\psi=\sum_{j=1}^nz^{j-1}\psi_j$ where $\psi_j \in L^2(z^n) .$
Then $\sum_{j=1}^n\phi_{ij}\overline{\psi_j}=0 \ a.e. \ \forall \ 1 \le i \le n.$ Thus, 
\begin{displaymath}
A_n(z){\left( \begin{array}{c} \overline{\psi_{1}(z)}\\
\vdots\\
\overline{\psi_{j}(z)}\\
\vdots \\
\overline{\psi_{n}(z)}
\end{array}\right)}=\left( \begin{array}{c} 0\\
\vdots\\
0\\
\vdots \\
0
\end{array}\right) \ a.e.
\end{displaymath}
It follows from this and (i) that $\psi=0 \ a.e.$ and this completes the proof of the result.
\end{proof}

\begin{lemma}\label{zero}
Let $\cl M$ be a simply invariant subspace of $L^p,\ 0 < p \le \infty$ under $S^n$ and 
let $f\in L^p.$ If $z^{nm}f\in \cl M$ for each $m \ge 1$ and there exists $k\in H^{\infty}(z^n)$ 
with $\langle{k,1}\rangle \ne 0$ such that $kf\in \cl M,$ then $f\in \cl M.$  
\end{lemma}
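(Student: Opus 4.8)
The plan is to reduce the statement to showing that $gf\in\cl M$ for a single auxiliary function $g$, and then to recover $f$ algebraically. Writing $\alpha_0=\langle k,1\rangle\neq 0$, set $g=k-\alpha_0$. Since $k\in H^\infty(z^n)$ and $\alpha_0$ is precisely its mean, $g$ is an $H^\infty(z^n)$ function with vanishing constant term, so $g=\sum_{j\ge 1}\alpha_j z^{jn}$. Once we know $gf\in\cl M$, the identity $\alpha_0 f=kf-gf$ together with $kf\in\cl M$ and the fact that $\cl M$ is a linear subspace gives $\alpha_0 f\in\cl M$, and hence $f\in\cl M$ because $\alpha_0\neq 0$.

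The heart of the argument is therefore to prove $gf\in\cl M$, using only that $z^{jn}f\in\cl M$ for every $j\ge 1$. First I would approximate $g$ by its Ces\`aro (Fej\'er) means $\sigma_N(g)$, formed in the variable $z^n$; each $\sigma_N(g)$ is a finite linear combination of the monomials $z^{jn}$ with $1\le j\le N$, so $\sigma_N(g)f$ is a finite linear combination of the vectors $z^{jn}f\in\cl M$ and hence lies in $\cl M$. Because the Fej\'er kernel is nonnegative with integral one, $\|\sigma_N(g)\|_\infty\le\|g\|_\infty$, and by Fej\'er's theorem $\sigma_N(g)\to g$ almost everywhere (pulling back along the measure-preserving map $z\mapsto z^n$). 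It then remains to pass to the limit $N\to\infty$ inside $\cl M$.

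The passage to the limit is where the different ranges of $p$ must be handled, and this is the point that requires care. For $0<p<\infty$ I would use dominated convergence: the pointwise bound $|\sigma_N(g)f-gf|^p\le(2\|g\|_\infty)^p|f|^p$, with the right-hand side integrable, yields $\|\sigma_N(g)f-gf\|_p\to 0$; note that this estimate needs only the a.e.\ bound and not any triangle inequality, so it is valid even in the non-locally-convex range $0<p<1$. For $p=\infty$ the sequence $\sigma_N(g)f$ is bounded in $L^\infty$ and converges almost everywhere to $gf$, so it converges weak-star (testing against $L^1$ and applying dominated convergence). In either case $gf$ is a limit, in norm or weak-star, of elements of $\cl M$, and since $\cl M$ is closed, respectively weak-star closed, we conclude $gf\in\cl M$, which completes the proof.

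The main obstacle I anticipate is precisely this uniform treatment of all $0<p\le\infty$: one cannot rely on norm inequalities valid only for $p\ge 1$, nor on norm closedness when $p=\infty$. Using the Fej\'er means, rather than partial sums or Poisson/Abel dilates, makes every approximant land in $\cl M$ exactly and reduces the whole limiting step to the single uniform estimate $\int|\sigma_N(g)f-gf|^p\,dm\le(2\|g\|_\infty)^p\int|f|^p\,dm$ together with almost-everywhere convergence.
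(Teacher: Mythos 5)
Your proof is correct and follows essentially the same route as the paper: form the Ces\`aro means of $k-\langle k,1\rangle$, observe that each mean times $f$ lies in $\cl M$ because it is a combination of the $z^{jn}f$ with $j\ge 1$, pass to the limit, and recover $f$ from $\alpha_0 f = kf-(k-\alpha_0)f$. Your explicit weak-star argument for $p=\infty$ is in fact slightly more careful than the paper's blanket claim of convergence ``in $L^p$,'' and correctly matches the paper's convention that invariant subspaces of $L^\infty$ are weak-star closed.
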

\begin{proof}Let $\langle{k,1}\rangle=a_0,$ then $k-a_0\in H^{\infty}(z^n).$ If we denote the $m^{th}$ Cesaro mean of $k-a_0$ by $k_m,$ then each $k_m$ is a polynomial in $z^n$ that converges to $k \ a.e.$ and $ ||k_m||_{\infty}\le ||k||_{\infty}.$ Clearly $k_mf \in \cl M \ \forall \ m \ge 1$ since $\langle{k_m-a_0,1}\rangle=0$ and $z^{nm}f \in \cl M \ \forall \ m \ge 1.$ Note that $k_mf \rightarrow (k-a_0)f$ in $L^p.$ From which it follows that $f\in \cl M.$ 
\end{proof}

\begin{lemma}\label{greater than 2}
Let $\cl M$ be simply invariant subspace of $L^p, \ 0 < p \le \infty$ under $S^n.$ Then $\overline{M}^{L^2}\cap L^p= M.$
\end{lemma}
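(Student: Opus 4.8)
The plan is to treat the essential range $2\le p\le\infty$ by a duality argument; the case $0<p\le2$ is routine, since there the inclusion $L^2\hookrightarrow L^p$ is norm-decreasing, so the $L^2$-closure sits inside $L^p$ and any $L^2$-approximating sequence drawn from $\cl M$ also converges in $L^p$, whence the limit lies in the $L^p$-closed space $\cl M$. For $2\le p\le\infty$ we have the chain $L^p\subseteq L^2\subseteq L^{p'}$ (with $p'=p/(p-1)$, replaced by the predual $L^1$ when $p=\infty$), and the containment $\cl M\subseteq\overline{\cl M}^{L^2}\cap L^p$ is immediate; the whole content is the reverse containment.

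First I would record two annihilators: the orthogonal complement $\cl M^{\perp}=\{w\in L^2:\langle g,w\rangle=0\ \forall g\in\cl M\}$ taken in $L^2$, and the annihilator $\cl M^{a}=\{w\in L^{p'}:\langle g,w\rangle=0\ \forall g\in\cl M\}$ taken in the dual of $L^p$. Since $L^p\subseteq L^2$, a function $h\in L^p$ lies in $\overline{\cl M}^{L^2}=(\cl M^{\perp})^{\perp}$ exactly when it is annihilated by every element of $\cl M^{\perp}$; thus $\overline{\cl M}^{L^2}\cap L^p$ is precisely the pre-annihilator in $L^p$ of $\cl M^{\perp}$. On the other hand, because $\cl M$ is norm-closed (weak-$*$ closed when $p=\infty$), the Hahn--Banach (bipolar) theorem identifies $\cl M$ with the pre-annihilator in $L^p$ of the full annihilator $\cl M^{a}$. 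As $L^2\subseteq L^{p'}$ gives $\cl M^{\perp}=\cl M^{a}\cap L^2\subseteq\cl M^{a}$, the equality $\overline{\cl M}^{L^2}\cap L^p=\cl M$ reduces to showing that $\cl M^{a}\cap L^2$ is dense in $\cl M^{a}$ --- in $L^{p'}$-norm when $p<\infty$, and weak-$*$ when $p=\infty$.

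The key step is this density. I would first note that $S^n$-invariance of $\cl M$ passes to the annihilator as invariance of $\cl M^{a}$ under multiplication by $\overline{z}^{\,n}$: if $g\in\cl M$ and $w\in\cl M^{a}$ then $\langle g,\overline{z}^{\,n}w\rangle=\langle z^{n}g,w\rangle=0$. Consequently $\cl M^{a}$ is a module over $\overline{H^{\infty}(z^n)}$ (approximate a bounded analytic symbol by the Cesàro means of its $z^n$-expansion and invoke dominated convergence in $L^{p'}$, exactly as in Lemma~\ref{zero}). Given $w=\sum_j z^{j-1}w_j\in\cl M^{a}$ with $w_j\in L^{p'}(z^n)$, set $V=\max_j|w_j|\in L^{p'}(z^n)$ and construct, by the outer-function device in the proof of Lemma~\ref{ortho}(v), functions $u_k\in H^{\infty}(z^n)$ with $|u_k|=\min(1,k/V)$. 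Then $\overline{u_k}\,w$ lies in $\cl M^{a}$ by the module property, is bounded and hence lies in $\cl M^{a}\cap L^2$, while $|u_k|\nearrow1$ forces $\overline{u_k}\,w\to w$ in $L^{p'}$ by dominated convergence. This establishes the density and closes the argument.

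The main obstacle is precisely this density step, and it is worth emphasizing why duality is needed at all: a direct attack would try to approximate a given $f\in\overline{\cl M}^{L^2}\cap L^p$ by elements of $\cl M$, but for $p>2$ the available $L^2$-approximants need not converge in $L^p$, and this gap is exactly what the passage to annihilators circumvents. Two points then require care. The modulus of the multiplier must be taken as a function of $z^n$, so that $u_k$ is analytic in $z^n$ and $\overline{u_k}$ keeps us inside the module $\cl M^{a}$; this is why $V$ is formed from the coordinate functions $w_j\in L^{p'}(z^n)$ rather than from $w$ itself. And one must justify the convergence $u_k\to1$ (in measure, hence the claimed $L^{p'}$-convergence after passing to a subsequence), which follows from $\log|u_k|\to0$ together with the standard behaviour of the conjugate-function construction underlying Lemma~\ref{ortho}(v).
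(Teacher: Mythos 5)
Your argument is correct, but it is not the route the paper takes. The paper proves the nontrivial inclusion directly: given $f\in\overline{\cl M}^{L^2}\cap L^p$ and $f_k\in\cl M$ with $f_k\to f$ in $L^2$, it decomposes everything into $L^p(z^n)$-components and invokes Lemma~4.1 of Gamelin's \emph{Uniform Algebras}, which supplies uniformly bounded $h_{kj}\in H^\infty$ (adjustable to lie in $H^{\infty}(z^n)$) tending to $1$ a.e.\ such that $h_{kj}f_{kj}\to f_j$ in $L^p$; the corrected products $\bigl(\prod_j h_{kj}\bigr)f_k$ then stay in $\cl M$ and converge to $f$ in $L^p$, so closedness of $\cl M$ finishes the proof in one stroke for all $0<p\le\infty$. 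You instead dualize: you identify both $\cl M$ and $\overline{\cl M}^{L^2}\cap L^p$ as pre-annihilators (of $\cl M^{a}$ and of $\cl M^{\perp}=\cl M^{a}\cap L^2$ respectively) and reduce the lemma to the $L^{p'}$-density of $\cl M^{a}\cap L^2$ in $\cl M^{a}$, which you obtain by the outer truncation $|u_k|=\min(1,k/V)$ in $H^{\infty}(z^n)$ acting on the $\overline{H^{\infty}(z^n)}$-module $\cl M^{a}$. Both proofs ultimately rest on the same device --- bounded outer functions of $z^n$ with prescribed modulus used to upgrade integrability without leaving an invariant subspace --- but the paper applies it on the $\cl M$ side via a citation, while you apply it on the annihilator side and stay self-contained (the truncation is exactly the construction already used in Lemma~\ref{ortho}(v), and the module property is Lemma~\ref{zero}'s Ces\`aro argument). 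The trade-offs: your duality scheme needs $p\ge 2$ so that $L^p\subseteq L^2\subseteq L^{p'}$ (you correctly dispose of $p\le 2$ by the trivial norm comparison), and two of your side remarks deserve tightening --- for $p=\infty$ what you actually produce and need is \emph{norm} density of $\cl M^{a}\cap L^2$ in $\cl M^{a}\subseteq L^1$ (``weak-$*$'' is not the right word for a subset of the predual), and the passage from $|u_k|\to 1$ to $u_k\to 1$ requires the conjugate-function/subsequence argument you flag, which you should note is equally implicit in the paper's own Lemma~\ref{ortho}(v). Neither point is a gap.
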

\begin{proof}
Let $f \in \overline{M}^{L^2}\cap L^p.$ Then $\exists$ a sequence $\{f_k\} \subseteq M$ such that $f_k \rightarrow f$ in $L^2.$ 
Note that there exists $f_{kj}, \ f_j \in L^2(z^n)$ such that $f_k=\sum_{j=1}^nz^{j-1}f_{kj},\ f=\sum_{j=1}^nz^{j-1}f_j$ 
and $f_{kj} \rightarrow f_j$ in $L^2$ for each $ j.$ 
By Lemma 4.1 in \cite{Gam}, there exists an uniformly bounded 
$h_{kj} \in H^{\infty}$ which converges to 1 a.e. and 
$h_{kj}f_{kj} \rightarrow f_j$ in $L^p \ \forall \ j=1,\cdots,n.$ 
Observe that we can easily assume that $h_{kj} \in H^{\infty}(z^n) \ \forall \  j.$
Thus, it follows that $\prod_{j=1}^n h_{kj}f_k$ is a sequence in $\cl M$ which converges to $f$ in $L^p$ 
and hence the result.
\end{proof}

\begin{remark}  The above lemma yields that there does not exist any proper $S^n$ 
invariant subspace of $L^p$ which is dense in $L^2.$
\end{remark}

We are now in a position to prove the first of our two main theorems.\\

\noindent{\bf{\em Proof of Theorem A.}} We break the proof into three cases.\\
{\bf Case (1).} Assume $p=2.$ By using the 
decomposition of isometries [9, page 109] we get that
\[\cl M=\cl M_1 \oplus \cl K \] where $\cl K=\bigcap_{k \ge 0}{S^{nk}\cl M},
 \ \cl M_1=\sum_{k\ge 0}S^{kn}\cl N$ and $\cl N=\cl M \ominus S^n\cl M.$ 
Clearly, $\bigcap_{k\ge0}S^{kn}\cl M_1=\{0\}$ and $\bigcap_{k\ge 0}S^{kn} \cl K =\cl K.$\\
To establish the result, we claim that the dimension of $\cl N \le n$.\\ 
Suppose there exists a set of n+1 orthonormal vectors $\{\phi_1,\dots,\phi_{n+1}\}$ 
in $\cl N.$ Then by (ii) and (iv) of Lemma~\ref{orthonormal} we find that
each $\phi_i$ is n-unimodular and $\phi_{n+1}\perp\phi_iL^2(z^n) \ \forall \ 1\le i \le n.$ 
Thus, by Lemma ~\ref{matrix}(ii), it follows that $\phi_{n+1}=0$ which is a contradiction to 
the fact that $||\phi_{n+1}||=1.$  
This proves that \[\cl M_1=\sum_{i=1}^r\oplus{\phi_i H^2(z^n)}\] 
where $r\le n, \ {\rm and} \  \{\phi_1,\dots, \phi_r\}$ is an orthonormal basis of $\cl N.$
The rest of the properties of $ \{\phi_i\}_{i=1}^r$ follow from Lemma~\ref{ortho}.\\
\indent Lastly, to see $\cl K=\cl K^{\overline{\phi}}_{\cl M},$ it is enough to note that  $\cl K$ 
is doubly invariant and is orthogonal to $\phi_iH^2(z^n)$ for each $ i.$

\indent Finally, if $r=n$ then Lemma~\ref{matrix}(ii) asserts that $\cl K^{\overline{\phi}}_{\cl M}=\{0\}$, 
and if $r < n$ then by Lemma~\ref{matrix}(i) there exist infinitely many 
functions in $\cl K^{\overline{\phi}}_{L^2}.$ 
Consequently, there exist infinitely many nonzero doubly invariant subspaces, $\cl K^{\overline{\phi}}_{M'} \subseteq \cl K^{\overline{\phi}}_{L^2}$ such that 
$\sum_{i=1}^r\oplus{\phi_i H^2(z^n)}\oplus \cl K^{\overline{\phi}}_{M'}$ is a simply invariant subspace of $L^2.$ 
This completes the proof of the case $p=2.$\\

\noindent{\bf Case (2).} Assume $0 < p < 2.$ 
We first claim that \[\cl M \cap L^2 \neq 0.\]
Let $f \in \cl M,$ then $|f|^{p/2}$ is in $L^2$. It is easy to see that 
\[ g(z)=\tfrac{\sum_{i=0}^{n-1}|f|^{p/2}(\alpha^iz)}{n} \]
belongs to $L^2(z^n),$ where $\alpha\neq 1$ is an $n^{th}$ root of unity. 
If h(z) denotes the harmonic conjugate of g(z), then 
$h \in L^2(z^n)$ and hence $g+ih$ is in $ H^2(z^n).$ 
Set $k={\rm exp}[-(g+ih)].$ Then $k$ is an outer function in $ H^{\infty}(z^n)$. 
It is immediate to see that $kf \in  L^{\infty}$ and hence in $L^2$.
We show that $kf$ is in $\cl M$. Let  $k_m$ be the $m^{th}$ Cesaro mean of $k$.
Then each $k_m$ is a polynomial in $H^2(z^n)$ and so $k_{m}f$ is in $\cl M$ for every m. 
Further, $\|k_m\|_{\infty} \le \|k\|_{\infty}$ and $k_m\rightarrow k$ 
almost everywhere.  Thus $\|k_m f-kf\|_p \rightarrow 0$.
This shows that $k f$ is in $\cl M$ and thus
$\cl M \cap L^2$ is a 
non-trivial subspace of $ L^2$. \\
\indent Note that $\cl M \cap L^2 $
is closed in $ L^2$ because $\cl M$ is closed in $L^p$. 
Also $\cl M \cap L^2 $ is a simply invariant subspace of $ L^2$ 
under $S^n,$ so by Case (1) we conclude that 
\[\cl M \cap L^2= \sum_{i=1}^r\oplus\phi_i H^2(z^n)\oplus \cl K_{\cl M \cap L^2}^{\overline{\phi}} \]
where $\{\phi_i\}_{i=1}^r$ is a set of n-unimodular orthonormal vectors 
that do not vanish on any set of positive measure and $\phi$ is the corresponding matrix 
in $L^{\infty}(z^n)$ as obtained in the Case (1). \\

We claim that \[\cl M = \sum_{i=1}^r\oplus\phi_i H^p(z^n)\oplus \cl K_{\cl M}^{\overline{\phi}} .\]
Since $\cl M$ is invariant under $S^n$ and $\phi_i\in \cl M,$ therefore, 
\[\cl M \supseteq \sum_{i=1}^r\oplus\phi_i H^p(z^n)\oplus \cl K_{\cl M}^{\overline{\phi}} .\]
To establish the other containment, let $f \in \cl M.$ 
Then as shown above, there exists an outer function 
$k\in H^{\infty}(z^n)$ such that $kf\in M\cap L^{\infty}.$ 
So we can write  $kf=\sum_{i=1}^r\phi_i h_i+s$ for some $h_i$ in $H^2(z^n)$ 
and $s\in \cl K_{\cl M \cap L^2}^{\overline{\phi}}.$ 
Note that
\[\tfrac{\sum_{j=0}^{n-1}|\sum_{i=0}^{n-1}\alpha^{ij}kf(\alpha^iz)|^2}{n^2}=\sum_{i=1}^r|h_i(z)|^2+\sum_{i=1}^n|s_i|^2,\] where $s=\sum_{i=1}^n z^{i-1}s_i, \ s_i\in L^2(z^n).$\\
Since $k\in H^2(z^n)$, we have that
\[\sum_{i=1}^r|h_i(z)|^2+\sum_{i=1}^n|s_i|^2=\tfrac{|k(z)|^2}{n^2}(\sum_{j=0}^{n-1}|\sum_{i=0}^{n-1}\alpha^{ij}f(\alpha^iz)|^2)\]
$\Longrightarrow$
\[\sum_{i=1}^r|\tfrac{h_i(z)}{k}|^2+\sum_{i=1}^n|\tfrac{s_i}{k}|^2=\tfrac{\sum_{j=0}^{n-1}|f(\alpha^iz)|^2}{n} 
\le \tfrac{(\sum_{j=0}^{n-1}|f(\alpha^iz)|)^2}{n}\]
This together with the fact that $k$ is outer shows 
that $\tfrac{h_i}{k} \in H^p(z^n)\ \forall \ i,$ which further 
implies that $\sum_{i=1}^r \oplus \phi_i \tfrac{h_i}{k} \in 
\sum_{i=1}^r \oplus\phi_i H^p(z^n)\subseteq \cl M.$ \\

\indent Lastly, we need to show that $\frac{s}{k}\in 
\cl K_{\cl M}^{\overline{\phi}}.$ Note, by the proof of Case (1) we have that 
$\cl K_{\cl M\cap L^2}^{\overline{\phi}}=\bigcap_{l \ge 0}{S^{nl}(\cl M \cap L^2)}.$ 
Thus, there exists $s_l\in \cl M \cap L^2$ such that $s=z^{ln}s_l \ \forall \ l \ge 0.$ 
We shall prove that $\frac{s_l}{k} \in \cl M \ \forall  \ l \ge 0 $ by induction on $l$.
Clearly, from the above paragraph $\frac{s_0}{k} \in \cl M.$
Also it can be easily seen that $\frac{s_1}{k}$ satisfies all 
the hypotheses of Lemma~\ref{zero} and thus $\frac{s_1}{k}\in \cl M.$ 
Note that $s_{l-1}=z^ns_l,$ thus it is immediate that $\frac{s_l}{k} \in \cl M$ if $\frac{s_{l-1}}{k}\in \cl M.$  
Thus, we conclude that $\frac{s}{k}\in \bigcap_{l\ge 0}S^{ln}\cl M.$ 
 Further, it can be seen that 
\begin{eqnarray*}
\bigcap_{l\ge 0}S^{ln}{\cl M} &\subseteq &
\{f \in \cl M : \exists \ k_f \in  H^{\infty}(z^n) \text{ with }
 k_ff \in \bigcap_{l\ge 0} S^{ln}(\cl M \cap L^2)\}\\
&=& \{ f \in \cl M: \frac{\sum_{j=0}^{n-1} \alpha^{n-(i-1)j} k_ff(\alpha^j z)}{n z^{i-1}}\overline{\phi_{i j}}=0 \ \forall \ i=1,\cdots n\}\\
&=& \{f \in \cl M:\frac{\sum_{j=0}^{n-1} \alpha^{n-(i-1)j} f(\alpha^j z)}{n z^{i-1}}\overline{\phi_{i j}}=0 \ \forall \ i=1,\cdots, n\}\\
&=& \{f \in \cl M:  \sum_{j=1}^n f_j\overline{\phi_{i j}}=0 \ \forall \ i=1,\cdots, n\}
= \cl K_{\cl M}^{\overline{\phi}}
\end{eqnarray*}
where $ f= \sum_{j=1}^n z^{j-1} f_j, \ f_j \in L^p(z^n).$
The rest of the conclusions are now immediate using Case $(1).$\\

\noindent {\bf Case (3).} Lastly, we assume that $p >2.$
It can be verified that $\overline{\cl M}^{L^2}$ is a 
simply invariant under $S^n$ which further yields that \\
\[\overline{\cl M}^{L^2}=\sum_{i=1}^r\oplus \phi_i H^2(z^n) \oplus \cl K_{\overline{\cl M}^{L^2}}^{\overline{\phi}}\]
 where $\{\phi_i\}_{i=1}^r, \ r \le n$ is a set of 
orthonormal vectors in $\overline{\cl{M}}^{L^2}\cap L^{\infty}$ such that each $\phi_i$ does not vanish on a set of positive measure. \\

By virtue of Lemma~\ref{greater than 2}, it is enough to show 
that \[\cl M \subseteq \sum_{i=1}^r \oplus \phi_i H^p(z^n) 
\oplus \cl K_{\overline{\cl M}^{L^2}}^{\overline{\phi}}\] 
Let $f \in \cl M,$ then there exists $ h_i \in H^2(z^n)$ 
and $k \in \cl K_{\overline{\cl M}^{L^2}}^{\overline{\phi}}$ such that
 \[f=\sum_{i=1}^r\phi_i h_i+ k\]  

If we let $k= \sum_{j=1}^nz^{j-1}k_j, \ k_j \in L^2(z^n),$ then
\[ \sum_{i=1}^n(|h_i|^2 + |k_i|^2) = \frac{(\sum_{j=0}^{n-1}|f(\alpha^jz)|^2)}{n} 
\le \tfrac{(\sum_{j=0}^{n-1}|f(\alpha^jz)|)^2}{n}.\]
This implies that
\[|h_i| \le \frac{(\sum_{j=0}^{n-1}|f(\alpha^jz)|)}{n^{1/2}}\]
and
\[|k_i| \le \frac{(\sum_{j=0}^{n-1}|f(\alpha^jz)|)}{n^{1/2}}\]
Hence $h_i \in L^p \cap  H^2(z^n)=H^p(z^n)$ and $ k_i \in L^p(z^n)\ \forall \ i.$ 
Thus, by the Lemma~\ref{greater than 2} we get that $k \in  \cl K_{\cl M}^{\overline{\phi}}$ and   
\[\cl M = \sum_{i=1}^r\oplus\phi_i H^p(z^n)\oplus \cl K_{\cl M}^{\overline{\phi}}.\]
Finally, we conclude the proof of the result by noting that  
\[\cl K_{\cl M}^{\overline{\phi}} = \cl K_{\overline{\cl M}^{L^2}}^{\overline{\phi}} \cap L^p.\]
which follows from the Lemma~\ref{greater than 2}.


\section{Proof of Theorem B.}

 In this section, we shall describe the simply and doubly invariant subspace of $S^n,$ 
 which are contractively contained in $L^p$ for $0< p\le \infty.$ But before we prove 
 the theorem we give some preliminary results which might be of interest in themselves.\\
 \indent To prove the following result which describes the commutant of an operator $S^n,$
 we need to recall the operator $A_{\phi}$ defined in Section 2.

\begin{lemma}\label{commutant}
 The commutant $\{S^n\}^{'}$ of the operator $S^n$ is the set of operators
\[\{A_{\phi}: \phi=(\phi_{ij})\in M_n( L^{\infty}(z^n)), \ 1 \le i,\ j \le n\}\]
\end{lemma}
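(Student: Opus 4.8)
The plan is to prove the two inclusions separately. The containment $\{A_\phi : \phi \in M_n(L^\infty(z^n))\} \subseteq \{S^n\}'$ is routine, while the reverse containment carries the real content. For the easy inclusion, I would fix $\phi \in M_n(L^\infty(z^n))$ and $f = \sum_{j=1}^n z^{j-1} f_j$ with $f_j \in L^2(z^n)$. Since each $\phi_{ij}$ is a function of $z^n$, multiplication by $z^n$ commutes with multiplication by $\phi_{ij}$ and preserves $L^2(z^n)$, so a direct computation gives $A_\phi(S^n f) = \sum_i z^{i-1}\sum_j \phi_{ij}(z^n f_j) = z^n A_\phi(f) = S^n A_\phi(f)$, whence $A_\phi \in \{S^n\}'$.

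For the reverse inclusion, I would exploit the reducing decomposition $L^2 = \bigoplus_{j=1}^n z^{j-1}L^2(z^n)$. First I record that $S^n$ is unitary on $L^2$ (its inverse being multiplication by $z^{-n}$), so any bounded $T$ commuting with $S^n$ automatically commutes with $(S^n)^{-1}$, hence with multiplication by $z^{kn}$ for every $k \in \bb Z$. Next, for $g \in L^2(z^n)$ I would write $T(z^{j-1}g) = \sum_{i=1}^n z^{i-1} T_{ij}(g)$ with $T_{ij}(g) \in L^2(z^n)$, which defines bounded operators $T_{ij}$ on $L^2(z^n)$. Comparing the $z^{i-1}$-components of the identity $T S^n (z^{j-1}g) = S^n T(z^{j-1}g)$ yields $T_{ij}(z^n g) = z^n T_{ij}(g)$, i.e.\ each block $T_{ij}$ commutes with multiplication by $z^n$ on $L^2(z^n)$.

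The heart of the argument is then a scalar multiplier lemma: any bounded $B$ on $L^2(z^n)$ commuting with multiplication by all $z^{kn}$ is multiplication by $\phi := B(1) \in L^\infty(z^n)$. I would first deduce $B(p) = \phi p$ for every trigonometric polynomial $p$ in $z^n$ from $B(z^{kn}) = z^{kn}B(1)$, then pass to an arbitrary $g \in L^2(z^n)$ by approximation and an a.e.-convergent subsequence to obtain $Bg = \phi g$. The one point requiring genuine care — and the main obstacle — is proving $\phi \in L^\infty$: I would test against characteristic functions $\chi_{E_\lambda}$ of the level sets $E_\lambda = \{|\phi| > \lambda\}$ (which are themselves functions of $z^n$), using $\lambda \|\chi_{E_\lambda}\|_2 \le \|\phi \chi_{E_\lambda}\|_2 = \|B\chi_{E_\lambda}\|_2 \le \|B\|\,\|\chi_{E_\lambda}\|_2$, which forces $m(E_\lambda) = 0$ once $\lambda > \|B\|$ and hence $\|\phi\|_\infty \le \|B\|$.

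Applying this lemma to each block gives $T_{ij} = M_{\phi_{ij}}$ with $\phi_{ij} = T_{ij}(1) \in L^\infty(z^n)$. Reassembling, for $f = \sum_j z^{j-1}f_j$ one obtains $T(f) = \sum_j \sum_i z^{i-1} \phi_{ij} f_j = A_\phi(f)$ with $\phi = (\phi_{ij}) \in M_n(L^\infty(z^n))$, which completes the identification $\{S^n\}' = \{A_\phi : \phi \in M_n(L^\infty(z^n))\}$.
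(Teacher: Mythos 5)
Your proof is correct and follows essentially the same route as the paper: both rest on the decomposition $L^2 = \bigoplus_{j=1}^n z^{j-1}L^2(z^n)$ and on the classical fact that the commutant of the bilateral shift consists of multiplication operators by $L^\infty$ functions. The only difference is one of packaging --- the paper conjugates by the explicit unitary $V = U \oplus zU \oplus \cdots \oplus z^{n-1}U$ and quotes the known commutant of $S \oplus \cdots \oplus S$ as $M_n(L^\infty)$, whereas you rederive that ingredient by hand (the blockwise commutation relations and the scalar multiplier lemma, including the $L^\infty$ bound via level sets), which makes your argument more self-contained but not substantively different.
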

\begin{proof}
The linear map $U:L^2\rightarrow L^2(z^n)$ defined by $U(z)=z^n$ is an onto isometry and consequently, the map \[V=U\oplus zU\oplus z^{n-1}U:\underbrace{L^2\oplus\cdots\oplus L^2}_{n \text{ times }} \to L^2\] defines an onto isometry. Note that $V^*S^nV=\underbrace{S\oplus\cdots\oplus S}_{n \text{ times }}. $ Thus,
\begin{eqnarray*}\{S^n\}^{'} &=& \left\{VTV^*:T\in \{S\oplus\cdots\oplus S\}^{'}\right\}\\
&=&\left\{V(M_{\psi_{ij}})V^*: (\psi_{ij})\in M_n(L^{\infty})\right\}\\
&=& \left\{ (z^{i-j} M_{\phi_{ij}}): (\phi_{ij}) \in M_n(L^{\infty}(z^n))\right\}
\end{eqnarray*}
where $UM_{\psi_{ij}}U^*= M_{\phi_{ij}}$ and $\phi_{ij} \in L^{\infty}(z^n).$ \\
 \indent Finally, note that $A_{\phi}=(z^{i-j} M_{\phi_{ij}})$ with respect to the decomposition 
$L^2 = L^2(z^n)\oplus z L^2(z^n)\oplus \cdots \oplus z^{n-1}L^2(z^n).$
\end{proof}


We shall now obtain the characterization of the Hilbert spaces boundedly contained in 
$L^q, \ 0 < q \le 2,$ on which $S^n$ acts as a unitary.
In view of Lemma 4.1 above, the ideas of the proof of Theorem 2.1 in \cite{PS} and Theorem 2.3 in \cite{Red} 
can be extended to prove an appropriate generalization of both the results for $S^n.$ 
For the sake of completeness we present the details here. Before we state and prove our result, we need 
another theorem stated in [15, page 3] and proved in \cite{Murray}.

\begin{thm}\label{Redett}
 If $\cl H$ is Hilbert space and $T: \cl H \to L^p, \ 0< p < 2,$ is a 
continuous linear operator, then there exists a continuous linear 
operator $U: \cl H \to L^2$ and a $g > 0$ in $ L^{2p/(2-p)}$
such that $T= M_gU$, where $M_g$ is multiplication by $g$.
\end{thm}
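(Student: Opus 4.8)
The plan is to reduce the factorization $T=M_gU$ to the existence of a single positive multiplier and then produce that multiplier by a Maurey-type change-of-density argument. Throughout, write $r=2p/(2-p)$, so that $\tfrac1p=\tfrac12+\tfrac1r$. The natural target exponent is forced by the easy direction: if $U:\cl H\to L^2$ is bounded and $g\in L^r$, then H\"older's inequality gives $\|gUh\|_p\le\|g\|_r\|Uh\|_2$, so $M_gU:\cl H\to L^p$ is automatically bounded. Conversely, to construct $U$ it suffices to find a strictly positive $g\in L^r$ and a constant $C$ with
\[
\int_{\bb T}\frac{|Th|^2}{g^2}\,dm\le C^2\|h\|_{\cl H}^2\qquad\text{for all }h\in\cl H,
\]
for then $U:=M_{1/g}T$ maps $\cl H$ into $L^2$ boundedly and $M_gU=T$.

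Next I would establish the square-function estimate that drives the whole argument: for every finite family $h_1,\dots,h_N\in\cl H$,
\[
\Big\|\big(\textstyle\sum_{j}|Th_j|^2\big)^{1/2}\Big\|_p\le A_p\,\|T\|\,\big(\textstyle\sum_j\|h_j\|_{\cl H}^2\big)^{1/2}.
\]
This follows by inserting Rademacher signs $\varepsilon_j$ and applying Khintchine's inequality to replace the square function by an average of $\|\sum_j\varepsilon_jTh_j\|_p=\|T(\sum_j\varepsilon_jh_j)\|_p\le\|T\|\,\|\sum_j\varepsilon_jh_j\|_{\cl H}$, together with the parallelogram identity $\|\sum_j\varepsilon_jh_j\|_{\cl H}^2=\sum_j\|h_j\|_{\cl H}^2$, which holds for every choice of signs in a Hilbert space. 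Reading this in $L^{p/2}$, it says precisely that the convex hull $\mathcal C$ of $\{|Th|^2:\|h\|_{\cl H}\le1\}$ is bounded in the quasi-norm of $L^{p/2}$ by $M:=(A_p\|T\|)^2$.

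I would then run the change-of-density step. Set $s=p/(2-p)$, so that $g^2\in L^s$ exactly when $g\in L^r$. For fixed $\phi\ge0$, a reverse-H\"older computation (Lagrange optimization, with optimal $w\propto\phi^{1/(s+1)}$ and using $s/(s+1)=p/2$) shows that
\[
\inf\Big\{\int\phi\,w^{-1}\,dm:\ w\ge0,\ \|w\|_s=1\Big\}=\|\phi\|_{p/2}.
\]
Applying a minimax theorem to the pairing $(\phi,w)\mapsto\int\phi\,w^{-1}\,dm$ over the convex set $\mathcal C$ and the convex set $\{w\ge0:\|w\|_s=1\}$, and using $\sup_{\phi\in\mathcal C}\|\phi\|_{p/2}\le M$, produces a weight $w_0\ge0$ with $\|w_0\|_s=1$ and $\int\phi\,w_0^{-1}\,dm\le M$ for all $\phi\in\mathcal C$; taking $\phi=|Th|^2$ with $\|h\|_{\cl H}\le1$ and putting $g=w_0^{1/2}$ yields the weighted bound above with $C^2=M$ and $g\in L^r$. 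Replacing $g$ by $\max(g,\epsilon)$ keeps the bound (the integrand only decreases) while making $g$ strictly positive and still in $L^r$, and then $U=M_{1/g}T$ completes the factorization, recovering exactly the conclusion with $g>0$ in $L^{2p/(2-p)}$.

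The main obstacle is the minimax step in the range $0<p<1$. There $s=p/(2-p)<1$, so $L^s$ is not locally convex and the set $\{\|w\|_s=1\}$ is not weakly compact, so a direct appeal to the Sion or Ky~Fan minimax theorem (which works smoothly for $1<p<2$, where $s>1$) is unavailable. I would circumvent this by replacing the abstract minimax with Nikishin's exhaustion argument: greedily select normalized functions $\phi_k\in\mathcal C$ whose mass on the complement of the previously chosen supports stays bounded below, use the uniform $L^{p/2}$ bound on $\mathcal C$ to force the process to terminate after finitely many steps, and assemble the extremal weight from the selected $\phi_k$. This is precisely the delicate measure-theoretic point at which the proof in \cite{Murray} does the real work, and it is where I would expect to spend the bulk of the effort.
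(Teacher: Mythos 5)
The paper does not prove this statement at all: it is imported verbatim as Maurey's factorization theorem, ``stated in [15, page 3] and proved in \cite{Murray}'', and is used as a black box in the proof of Theorem~\ref{reducing1}. So there is nothing in the paper to compare against line by line; what you have written is essentially a reconstruction of Maurey's own proof (square-function estimate via Khintchine, change of density, minimax/Nikishin exhaustion), and that is the right strategy. Two points need attention. First, your justification of the square-function estimate contains a false claim: $\|\sum_j\varepsilon_jh_j\|_{\cl H}^2=\sum_j\|h_j\|_{\cl H}^2$ does \emph{not} hold for every choice of signs (take $h_1=h_2$); it holds only on average over the Rademacher signs. The estimate you need, $\mathbb{E}_\varepsilon\|\sum_j\varepsilon_jh_j\|_{\cl H}^p\le(\sum_j\|h_j\|_{\cl H}^2)^{p/2}$, is still true, but it requires the averaged parallelogram identity together with Jensen's inequality (using $p\le 2$); as written the step is wrong. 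Second, you correctly identify that the minimax argument breaks down for $0<p<1$ (where $s=p/(2-p)<1$ and the weight set is neither convex nor weakly compact) and you propose to substitute Nikishin's exhaustion argument, but you do not carry it out; since that is exactly where the real content of Maurey's theorem lies, the proposal as it stands is an accurate plan rather than a complete proof. If the intent is to match the paper, the honest answer is that a citation to Maurey (or to Redett's exposition) suffices, since the authors themselves do no more than that.
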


\begin{thm}\label{reducing1}Let $\cl H$ be a Hilbert space boundedly 
contained in $L^q, \ 0 < q \le 2.$ Then $S^n$ acts unitarily 
on $\cl H$ if and only if there exists a function $h$ in 
$L^{2q/(2-q)}$ and $\phi=(\phi_{ij})\in M_n(L^{\infty}(z^n))$ 
such that $\cl H=\cl R(M_{h}A_{\phi})$ isometrically; that is, 
$\|h\|_{\cl H}=\|h\|_{\cl R(M_{h}A_{\phi})}$ for all $h$ in $\cl H.$ 
When $\cl H$ is contractively contained in $L^q,$ we have $\|M_{h}A_{\phi}\|\le 1.$

\end{thm}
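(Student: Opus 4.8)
The plan is to prove both implications, placing the substantive work in the forward (``only if'') direction and obtaining the converse by a direct computation with range spaces. Throughout I would use that $S^n=M_{z^n}$ is a \emph{unitary} on $L^2$ (since $|z^n|=1$ on $\bb T$) and that, by Lemma~\ref{commutant}, every operator commuting with $S^n$ on $L^2$ has the form $A_\phi$ with $\phi\in M_n(L^\infty(z^n))$.

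For the \emph{sufficiency}, suppose $\cl H=\cl R(M_hA_\phi)$ with $h\in L^{2q/(2-q)}$ and $\phi\in M_n(L^\infty(z^n))$. First I would check that $M_hA_\phi\colon L^2\to L^q$ is bounded: $A_\phi$ is bounded on $L^2$ because its entries lie in $L^\infty(z^n)$, while $M_h\colon L^2\to L^q$ is bounded by H\"older's inequality, $\|M_hf\|_q\le\|h\|_{2q/(2-q)}\|f\|_2$ (using $\tfrac1q=\tfrac{2-q}{2q}+\tfrac12$). Hence the range space of $M_hA_\phi$ is boundedly contained in $L^q$, giving the containment. For the unitarity of $S^n$ on $\cl H$, note $S^n$ commutes with $M_h$ and, by Lemma~\ref{commutant}, with $A_\phi$, so $S^n(M_hA_\phi k)=M_hA_\phi(S^nk)\in\cl H$; since $S^n$ is unitary on $L^2$ and commutes with $M_hA_\phi$, it induces a norm-preserving bijection of the preimage sets that define the range norm, whence $\|S^nf\|_{\cl H}=\|f\|_{\cl H}$, and surjectivity onto $\cl H$ follows by inverting with $S^{-n}=M_{\bar z^n}$. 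An isometric surjection of a Hilbert space is unitary.

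For the \emph{necessity}, the inclusion $\iota\colon\cl H\to L^q$ is bounded, and since $S^n$ acts on the subspace $\cl H\subseteq L^q$ by restriction, it intertwines the unitary $W:=S^n|_{\cl H}$ with $M_{z^n}$ on $L^q$. When $0<q<2$ I would invoke Theorem~\ref{Redett} to factor $\iota=M_gU$ with $U\colon\cl H\to L^2$ bounded and $g>0$ in $L^{2q/(2-q)}$; when $q=2$ I take $U=\iota$ and $g=1$. Because $g>0$ a.e.\ the operator $M_g$ is injective, so $U$ is injective, and cancelling $M_g$ in $\iota W=M_{z^n}\iota$ gives $UW=S^nU$. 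Taking adjoints and using that both $W$ and $S^n|_{L^2}$ are unitary yields $WU^*=U^*S^n$, whence $UU^*$ commutes with $S^n$; by Lemma~\ref{commutant}, $UU^*=A_\psi$ for some $\psi\in M_n(L^\infty(z^n))$. Since the positive square root of an operator commuting with the unitary $S^n$ again lies in the commutant, $(UU^*)^{1/2}=A_\phi$ for some $\phi\in M_n(L^\infty(z^n))$.

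It then remains to match the norms, which is where I expect the real work to lie. As $U$ is injective, it carries $\cl H$ isometrically onto its range space $\cl R(U)$, and the standard range-space identity $\cl R(U)=\cl R\big((UU^*)^{1/2}\big)=\cl R(A_\phi)$ holds isometrically inside $L^2$. Finally, because $M_g$ is injective, $M_gA_\phi k'=M_gA_\phi k$ is equivalent to $A_\phi k'=A_\phi k$, so $A_\phi k\mapsto M_gA_\phi k$ is an isometry of $\cl R(A_\phi)$ onto $\cl R(M_gA_\phi)$; composing with $U$ and recalling $\iota=M_gU$ identifies $\cl H$, as a subspace of $L^q$, with $\cl R(M_gA_\phi)$ isometrically, so setting $h=g$ yields the asserted form. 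For the contractive refinement, contractive containment gives, for $f=M_hA_\phi k$, that $\|M_hA_\phi k\|_q\le\|f\|_{\cl H}=\|f\|_{\cl R(M_hA_\phi)}\le\|k\|_2$, hence $\|M_hA_\phi\|\le1$. The main obstacle is assembling the de Branges range-space machinery correctly---justifying the isometric identities $\cl R(U)\cong\cl H$ and $\cl R(U)=\cl R\big((UU^*)^{1/2}\big)$, and the transport of the range norm through the injective but non-surjective multiplier $M_g$.
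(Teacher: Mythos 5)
Your proposal is correct and follows essentially the same route as the paper: Maurey's factorization (Theorem~\ref{Redett}) to write the inclusion as $M_gU$, the commutant Lemma~\ref{commutant} to identify $UU^*$ as an $A_\psi$, the $C^*$-algebra/commutant closure under positive square roots to get $A_\phi$, and the Douglas range-space identity $\cl R(U)=\cl R\bigl((UU^*)^{1/2}\bigr)$. The only differences are organizational and expository---you treat $q=2$ as the degenerate case $g=1$ rather than as a separate case to which the other is reduced, and you spell out details the paper leaves implicit (the isometry of $S^n$ on the range space in the sufficiency direction, and the transport of the range norm through the injective multiplier $M_g$).
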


\begin{proof}If $\cl H=\cl R(M_{h}A_{\phi})$ isometrically, for some 
$h$ in $L^{2q/(2-q)}$ and $\phi=(\phi_{ij})\in M_n(L^{\infty}(z^n)),$ 
then to prove that $S^n$ acts unitarily on $\cl H,$  it is enough to 
note that $z^nh, \ z^{-n}h \in \cl H \ \forall \ h \in \cl H.$\\ 
To prove the converse we assume that $S^n$ acts unitarily on $\cl H$ 
and let $C: \cl H \to L^q$ denote the bounded containment. 
We now divide the proof in two cases.\\ 
\indent For $q=2,$ there exists a unitary operator 
$U: \cl H \to \cl H$ such that $S^nC=CU.$ This means that $S^nCC^*{S^n}^*=CC^*,$ 
because $U$ is a unitary. Hence, by Lemma~\ref{commutant}, 
$CC^*=A_{\psi}$ for some $\psi \in M_n(L^{\infty}(z^n))$ 
and so by Douglas' factorization theorem $\cl H=\cl R(C)=\cl R((A_{\psi})^{1/2})$.  
Note that $\{A_{\psi}: \phi \in L^{\infty}(z^n)\}$ 
is a $C^*$-subalgebra of $M_n(L^{\infty}),$ thus 
there exists $\phi \in L^{\infty}(z^n)$ 
such that $(A_{\psi})^{1/2}=A_{\phi}.$ It can be 
easily verified that $\|h\|_{\cl H}=\|h\|_{\cl R(A_{\phi})}$ and 
$\|A_{\phi}\| \leq 1$ whenever $\cl H$ is contractively contained in $L^2.$ \\
\indent We now proceed with the case $0<q<2.$ 
By Theorem~\ref{Redett}, there exists a $g > 0$ in $ L^{2q/(2-q)}$
and an operator $U: \cl H \to L^2$ such that $C=M_gU.$ 
Thus, $\cl H=\cl R(C)= \cl R(M_gU)$ 
and since $g >0$ we get that $S^n$ acts unitarily on 
$\cl R(U).$ Now it follows from the above case($q=2$) that 
$\cl R(U)= \cl R(A_{\phi})$ isometrically for some 
$\phi \in M_n(L^{\infty}(z^n))$ such that $UU^*= A_{\phi}^2$ 
with $A_{\phi} \geq 0.$ Finally, we get that  
$\cl H=\cl R(M_{g}A_{\phi})$ isometrically since $g > 0$ and 
$\|M_{g}A_{\phi}\| \leq 1$ when $\cl H$ is contractively contained in $L^q, 0<q <2.$\\
Note,
\begin{equation*} 
h=\left\{\begin{array}{rl} 1  & {\rm when } \ q=2 \\
g & {\rm  when } \ 0 < q <2
\end{array} \right.
\end{equation*}
\end{proof}

\begin{remark}
Note that the above theorem gives us an alternate characterization of 
the reducing part in the classical case of $L^2.$
\end{remark}

As an immediate consequence we get the following analogue of 
Corollary 2.2 in \cite{PS} and Theorem 2.4 in \cite{Red}.
\begin{cor}\label{reducing2}Let $\cl H$ be boundedly contained in 
$L^q, \ 0< q \le 2$ and let $S^n$ act unitarily on $\cl H.$ Then 
$\cl H\cap L^{\infty}\ne 0$ if and only if $\cl H\ne 0.$ 
\end{cor}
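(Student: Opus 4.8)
The plan is to prove Corollary~\ref{reducing2}, which asserts that for a Hilbert space $\cl H$ boundedly contained in $L^q$ ($0<q\le 2$) on which $S^n$ acts unitarily, we have $\cl H\cap L^\infty\ne 0$ precisely when $\cl H\ne 0$. One direction is trivial: if $\cl H\cap L^\infty\ne 0$ then certainly $\cl H\ne 0$. So the entire content lies in the forward direction: assuming $\cl H\ne 0$, I must produce a nonzero element of $\cl H$ that also lies in $L^\infty$.

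The natural approach is to invoke Theorem~\ref{reducing1}, which gives the explicit structural description $\cl H=\cl R(M_hA_\phi)$ isometrically, where $\phi=(\phi_{ij})\in M_n(L^\infty(z^n))$ and $h\in L^{2q/(2-q)}$ (with $h=1$ when $q=2$ and $h=g>0$ otherwise). Since $\cl H\ne 0$, the operator $M_hA_\phi$ is not the zero operator, so there is some $f\in L^2$ with $M_hA_\phi f\ne 0$. First I would take any such $f$ and observe that $A_\phi f\in L^2$, since $A_\phi$ is a bounded operator on $L^2$ (as noted in Section~2). Writing $A_\phi f=\sum_{i=1}^n z^{i-1}g_i$ with $g_i\in L^2(z^n)$, the idea is to truncate: multiply by a suitable bounded function that is supported where $A_\phi f$ is controlled in size, so as to land inside $L^\infty$ while staying in the range space.

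Concretely, the key step is to exploit the commutant structure. Because each $\phi_{ij}\in L^\infty(z^n)$, the operator $A_\phi$ commutes with multiplication by $z^n$, and more generally with $A_\psi$ for any $\psi\in M_n(L^\infty(z^n))$ (this is the $C^*$-algebra structure already used in the proof of Theorem~\ref{reducing1}). So if I choose $\psi$ to be a bounded multiplier — for instance the characteristic function (in the $z^n$-variable) of the set where $|A_\phi f|$ and $h$ are both bounded by some large constant $N$ — then $A_\psi A_\phi f=A_\phi A_\psi f\in\cl R(A_\phi)$, and multiplying by $h$ keeps us in $\cl R(M_hA_\phi)=\cl H$. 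On the set where the truncation is active, $h\cdot A_\phi(\psi f)$ is bounded, hence lies in $L^\infty$; and for $N$ large enough this element is nonzero since $M_hA_\phi f\ne 0$ means $h\cdot A_\phi f$ is nonzero on a set of positive measure, which for large $N$ survives the truncation. This realizes a nonzero $L^\infty$ function inside $\cl H$.

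The main obstacle I anticipate is handling the factor $h$ carefully when $0<q<2$: here $h=g$ is merely in $L^{2q/(2-q)}$ and strictly positive but need not be bounded, so the product $g\cdot A_\phi(\psi f)$ is $L^\infty$ only if the truncation also forces $g$ to be bounded on the relevant set. The fix is to truncate on the set $E_N=\{z:\ |A_\phi f|\le N,\ g\le N\}$; since $g>0$ a.e.\ and finite a.e., and $A_\phi f\in L^2$ is finite a.e., the sets $E_N$ exhaust $\bb T$ up to a null set, so the truncated product is nonzero for $N$ large. One must verify that the multiplier $\psi$ implementing restriction to $E_N$ genuinely lies in $M_n(L^\infty(z^n))$ — i.e.\ that the cutoff set can be taken invariant under the $n$-th roots of unity rotation, so that its indicator is a function of $z^n$; this is arranged by replacing $E_N$ with $\bigcap_{\zeta^n=1}\zeta E_N$, which is rotation-invariant and still exhausts the circle. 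With that technical point settled, the containment $h\cdot A_\phi(\psi f)\in\cl H\cap L^\infty$ and its nonvanishing complete the proof.
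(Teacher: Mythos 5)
Your argument is correct and follows exactly the route the paper intends: Corollary~\ref{reducing2} is asserted there as an immediate consequence of Theorem~\ref{reducing1}, and your truncation of $h\,A_{\phi}f$ over the rotation-invariant sets $\bigcap_{\zeta^n=1}\zeta E_N$ supplies precisely the details that make this work when $0<q<2$ (for $q=2$ one can more simply take a nonzero $A_{\phi}(z^{j-1})\in L^{\infty}$). One minor correction: $M_n(L^{\infty}(z^n))$ is noncommutative, so $A_{\phi}$ does not commute with every $A_{\psi}$ as your parenthetical suggests; this does not affect your proof, since the multiplier you actually use, $\chi_{E_N}I_n$, is a scalar matrix and hence central.
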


\noindent The following result completes the characterization of 
Hilbert spaces boundedly contained in $L^p, \ 0 < p \le \infty,$ on which $S^n$ acts unitarily.
\begin{thm}
If $\cl H$ is a Hilbert space boundedly contained in $L^q, \ 2 <q \le \infty,$ on which 
$S^n$ acts unitarily, then $\cl H= \{0\}.$
\end{thm}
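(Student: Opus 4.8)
The plan is to assume $\cl H\neq\{0\}$ and derive a contradiction from the interplay between the spectral representation of the unitary $S^n$ on $\cl H$ and the bounded containment $\|\cdot\|_q\le C\|\cdot\|_{\cl H}$, the essential point being that $q>2$. First I would dispose of the case $q=\infty$: since $\bb T$ has finite measure, $L^{\infty}\subseteq L^4$ with $\|\cdot\|_4\le\|\cdot\|_{\infty}$, so a Hilbert space boundedly contained in $L^{\infty}$ on which $S^n$ is unitary is also boundedly contained in $L^4$; hence it suffices to treat $2<q<\infty$. (Alternatively one may note $L^q\subseteq L^2$ for $q\ge 2$, so $\cl H$ is boundedly contained in $L^2$ as well, and invoke Theorem~\ref{reducing1} to write $\cl H=\cl R(A_\phi)$ isometrically with $A_\phi\ge 0$; but the argument below is self-contained.) Pick $f\in\cl H$ with $f\neq 0$, and let $\cl H_f$ be the closure in $\cl H$ of the linear span of $\{(S^n)^kf:k\in\bb Z\}$, a reducing subspace for the unitary $S^n$. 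By the spectral theorem in cyclic form there is a finite positive Borel measure $\mu$ on $\bb T$, in the variable $w=z^n$, with $\mu(\bb T)=\|f\|_{\cl H}^2<\infty$ and
\[ \|P(S^n)f\|_{\cl H}^2=\int_{\bb T}|P(w)|^2\,d\mu(w) \]
for every Laurent polynomial $P$.

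Next I would convert the $L^q$ side into an integral in $w$. Since $z\mapsto z^n$ pushes $m$ forward to $m$ and $|P(z^n)|^q=|P(w)|^q$ is constant on fibres, setting $F(w)=\tfrac1n\sum_{\zeta^n=w}|f(\zeta)|^q$ gives $F\ge 0$, $\int_{\bb T}F\,dm=\|f\|_q^q>0$, and $\|P(z^n)f\|_q^q=\int_{\bb T}|P(w)|^qF(w)\,dm(w)$. Because $P(S^n)f$ coincides with $P(z^n)f$ as an element of $L^q$, bounded containment yields the key inequality
\[ \int_{\bb T}|P(w)|^qF(w)\,dm(w)\ \le\ C^q\Big(\int_{\bb T}|P(w)|^2\,d\mu(w)\Big)^{q/2} \]
for all Laurent polynomials $P$. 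For $q=2$ this merely asserts $F\,dm\le C^2\mu$ as measures, which is harmless; the contradiction will come precisely from $q>2$.

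Finally I would run a concentration argument. Fix a Lebesgue point $w_0$ of $F$ with $F(w_0)>0$ and choose nonnegative Fej\'er-type trigonometric polynomials $P_\delta$ that are $L^2(dm)$-normalized and concentrated on the arc $I_\delta$ of length $\delta$ about $w_0$, so that $|P_\delta|^2\sim\delta^{-1}$ on $I_\delta$. Standard kernel asymptotics then give $\int|P_\delta|^qF\,dm\sim F(w_0)\,\delta^{1-q/2}$ and $\int|P_\delta|^2\,d\mu\lesssim\mu(I_\delta)/\delta$ (up to controllable tails); substituting into the key inequality and clearing the $\delta^{q/2}$ factor produces the local lower bound $\mu(I_\delta)\gtrsim c\,\delta^{2/q}$, where $c$ can be taken uniform over the positive-measure set $\{F>\eta\}$ for a suitable $\eta>0$. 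Since $q>2$ forces $2/q<1$, covering this set by $\gtrsim\delta^{-1}$ essentially disjoint arcs of length $\delta$ centred there yields $\mu(\bb T)\gtrsim\delta^{2/q-1}\to\infty$ as $\delta\to 0$, contradicting $\mu(\bb T)<\infty$. Hence no nonzero $f$ exists, and $\cl H=\{0\}$.

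The main obstacle is this last step: making the peaking estimates rigorous — selecting the $P_\delta$ (Fej\'er or Poisson kernels) so that the $L^q(F\,dm)$ mass genuinely concentrates at a Lebesgue point while the tails of $|P_\delta|^2$ integrated against the \emph{possibly singular} measure $\mu$ remain negligible, and arranging that the lower bound $\mu(I_\delta)\gtrsim c\,\delta^{2/q}$ holds with a single constant on a set of centres of positive measure so that the covering sum actually diverges. Everything else — the finite-measure embedding reducing $q=\infty$ to finite $q$, the fibre-averaging identity for $F$, and the spectral bookkeeping — is routine.
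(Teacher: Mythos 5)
Your outline is correct and takes a genuinely different route from the paper's. The paper finishes in three lines: since the $L^q$-norm dominates the $L^2$-norm, $\cl H$ is boundedly contained in $L^2$, so Theorem~\ref{reducing1} gives $\cl H=\cl R(A_{\phi})$ with $\phi\in M_n(L^{\infty}(z^n))$; each column of $\phi$ then multiplies $L^2(z^n)$ into $\cl H\subseteq L^q$, hence multiplies $L^2$ into $L^q$, and no nonzero bounded function can do that for $q>2$ (it is bounded below on a set of positive measure, where one places a function in $L^2\setminus L^q$). You mention this route parenthetically but instead bypass Theorem~\ref{reducing1} --- and with it the commutant lemma and Douglas factorization --- in favour of the spectral measure of a cyclic vector plus a kernel-concentration argument. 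That buys self-containedness, but it relocates all the work into the final step. Your reduction of $q=\infty$ to $q=4$, the fibre-averaging identity for $F$, and the testing inequality $\int|P|^qF\,dm\le C^q\bigl(\int|P|^2\,d\mu\bigr)^{q/2}$ are all correct.

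The step you flag as the obstacle is real, and as written it is the one place the proof is not yet a proof. Two of your intermediate claims are not literally what the kernels give you. Take $P_N(w)=(N+1)^{-1/2}\sum_{k=0}^{N}(\overline{w_0}w)^k$, so that $\|P_N\|_2=1$ and $|P_N|^2$ is the Fej\'er kernel $K_N$ centred at $w_0$. The lower bound on the left side is fine ($K_N\ge cN$ on the arc of length $1/N$, plus the Lebesgue-point property of $F$), but on the right side the bound is not $\mu(I_\delta)/\delta$: since $\mu$ may be singular, you must use $K_N(\theta)\le C\min(N,1/(N\theta^2))$ to get the dyadic-tail estimate $(K_N*\mu)(w_0)\le CN\sum_{j\ge0}4^{-j}\mu\bigl(I_{2^j/N}(w_0)\bigr)$, and the testing inequality then yields the weighted bound $\sum_{j}4^{-j}\mu\bigl(I_{2^j/N}(w_0)\bigr)\ge c\,F(w_0)^{2/q}N^{-2/q}$ rather than the pointwise $\mu(I_\delta)\gtrsim\delta^{2/q}$ you state. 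Uniformity of the constant and of the admissible range of $N$ over a positive-measure set of centres requires an Egorov argument on the Lebesgue points of $\{F>\eta\}$, not just the remark that the set has positive measure. Once these are in place the covering step does close: summing over a maximal $1/N$-separated net of $M\gtrsim N$ such centres and using that for fixed $j$ the arcs $I_{2^j/N}(w_i)$ have overlap $O(2^j)$ gives $C\mu(\bb T)\ge\sum_i\sum_j4^{-j}\mu\bigl(I_{2^j/N}(w_i)\bigr)\gtrsim N^{1-2/q}\to\infty$, the desired contradiction. So the approach is viable, but you should either supply these details or take the paper's shortcut through Theorem~\ref{reducing1}, which makes the whole theorem a two-line corollary.
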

\begin{proof}If $\cl H$ is boundedly contained in $L^q$ for $ q>2,$ 
then $\cl H$ is boundedly contained in $L^2,$ since $L^q\subseteq L^2$ 
and the $L^q$-norm dominates the $L^2$-norm. Thus by Theorem \ref{reducing1},  
$\cl H=\cl R(M_{\mu}A_{\phi})$ for some function $\phi=(\phi_{ij})\in M_n{L^{\infty}(z^n)}.$ 
Then for each fixed $j, \ 1\le j\le n,$ the function  $\sum_{i=1}^n z^{i-1}\phi_{ij}$ 
multiplies $L^2(z^n) $ into $\cl H$ , and hence multiplies $L^2$ into $L^q.$ 
However, there is no nonzero function in $L^{\infty}$ that multiplies 
$L^2$ into $L^q.$ Therfore, $\sum_{i=1}^n z^{i-1}\phi_{ij}=0, \ \forall \  1\le j\le n.$ 
This implies, $\phi_{ij}=0 \ \forall \ 1\le i,j\le n$ and hence $\cl H=0.$
\end{proof}

We are now in a position to characterize the Hilbert spaces, simply 
invariant under $S^n$ which are contractively contained in 
$L^q, \ 0<q\le{\infty}.$ We begin with the following lemma.
\begin{lemma}{\label{DB ortho}}
Let $\cl M \neq 0 $ be a 
Hilbert space, simply invariant under $S^n$, 
contractively contained in $L^q, \ 0 < q \le 2$ and on which 
$S^n$ acts isometrically. Let $\cl N=\cl M \ominus S^n (\cl M),$ 
then
\begin{itemize}
 \item [(i)] $\cl N \subseteq L^{2q/(2-q)},$
\item[(ii)]for every $\phi \in \cl N$ we have that 
$\|\phi f\|_{\cl M}= \|f\|_2 \ \forall f \in H^2(z^n),$
\item[(iii)]elements of $\cl N$ cannot vanish on a set of positive measure.
\end{itemize}

\end{lemma}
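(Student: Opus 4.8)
The plan is to follow the template of Lemma~\ref{ortho}, replacing the ambient $L^2$ inner product by the abstract $\cl M$ inner product and compensating with the contractive containment $\|\cdot\|_q\le\|\cdot\|_{\cl M}$ and Theorem~\ref{Redett}. Fix a unit vector $\phi\in\cl N$. Since $S^n$ is an isometry on $\cl M$, the decomposition of isometries shows that $\{z^{nk}\phi\}_{k\ge0}$ is orthonormal in $\cl M$, so that for every polynomial $F=\sum_k a_kz^{nk}$ we have $\|\phi F\|_{\cl M}=(\sum_k|a_k|^2)^{1/2}=\|F\|_2$; feeding this through the contraction yields the multiplier bound $\|\phi F\|_q\le\|F\|_2$ for all such $F$. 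Writing $g\in L^2(z^n)$ as $g=\omega|g|$ and replacing $|g|$ by the modulus of an outer function in $H^2(z^n)$, I would upgrade this to $\||\phi|\,g\|_q\le\|g\|_2$ for all $g\in L^2(z^n)$. By H\"older duality among functions of $z^n$, this is equivalent to the single statement that the average $\tfrac1n\sum_{l=0}^{n-1}|\phi(\alpha^l z)|^q$ lies in $L^{2/(2-q)}$, with norm $\le 1$, where $\alpha$ is a primitive $n$th root of unity.

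For part (i) I would treat $q=2$ first. There the average above is bounded by $1$, and since $\tfrac1n\sum_l|\phi(\alpha^lz)|^2=\sum_{j=1}^n|\phi_j|^2$ while $|\phi|^2=|\sum_j z^{j-1}\phi_j|^2\le n\sum_j|\phi_j|^2$ by Cauchy--Schwarz, we obtain $|\phi|^2\le n$ a.e., i.e.\ $\phi\in L^\infty=L^{2q/(2-q)}$. For $0<q<2$ I would invoke Theorem~\ref{Redett} to factor the containment $C\colon\cl M\to L^q$ as $C=M_gU$ with $g>0$ in $L^{2q/(2-q)}$ and $U\colon\cl M\to L^2$ bounded. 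Since $g>0$, the identity $g\,U(z^nm)=z^n g\,U(m)$ forces $U$ to intertwine $S^n$, so $\cl R(U)$, with its range norm, is boundedly contained in $L^2$, simply invariant, with $S^n$ acting isometrically, and $u_0:=U\phi=\phi/g$ is a wandering vector for it. Applying the already-established $q=2$ conclusion to $\cl R(U)$ gives $u_0\in L^\infty$, whence $\phi=g\,u_0\in L^{2q/(2-q)}$. This proves (i).

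Part (ii) is then routine: given $f\in H^2(z^n)$, choose polynomials $F_m\to f$ in $L^2$. The vectors $\phi F_m$ form a Cauchy sequence in $\cl M$ by the polynomial identity, converging to some $v\in\cl M$, while (i) together with H\"older gives $\|\phi(F_m-f)\|_q\le\|\phi\|_{2q/(2-q)}\|F_m-f\|_2\to0$; hence $\phi F_m\to\phi f$ in $L^q$ and so $v=\phi f$. Thus $\phi f\in\cl M$ and $\|\phi f\|_{\cl M}=\lim_m\|F_m\|_2=\|f\|_2$ for unit $\phi$ (the general case by scaling).

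For part (iii) I would imitate Lemma~\ref{ortho}(v): assuming $\phi$ vanishes on a set $A$ of positive measure, build $h_m=\mathrm{exp}(t_m+i\tilde t_m)\in H^\infty(z^n)$ with $t_m$ supported and large on $B=\{z^n:z\in A\}$, so that $\|h_m\|_2\to\infty$ while, by (i), $\phi h_m$ stays bounded in $L^{2q/(2-q)}$, since it vanishes on $A$ and equals $\phi$ off $A$. By (ii), $\phi h_m\in\cl M$ with $\|\phi h_m\|_{\cl M}=\|h_m\|_2\to\infty$; equivalently, after the reduction $\phi=g\,u_0$, the wandering vector $u_0$ of $\cl R(U)\subseteq L^2$ would itself vanish on $A$. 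The main obstacle lies exactly here: in Lemma~\ref{ortho}(v) the contradiction came from the equality $\|\phi h_m\|_2=\|h_m\|_2$, available because $\cl M$ was there a genuine $L^2$-subspace, whereas the de Branges containment supplies only the one-sided estimate $\|\cdot\|_q\le\|\cdot\|_{\cl M}$, so the blow-up of the $\cl M$-norm must be played off against a bounded ambient quantity by a separate device. I expect to close this gap by combining the reduction to the $L^2$-case with the isometry of (ii), detecting the growth through a weak-limit argument in $\cl M$ anchored to the nonvanishing of $u_0$; this is the step I anticipate will demand the most care, and the one where the contractive-containment hypothesis must genuinely be used rather than the naive $L^2$ identity.
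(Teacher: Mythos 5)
Your treatment of (i) and (ii) is correct and is essentially the paper's argument with the last step made explicit: the paper likewise deduces from the orthonormality of $\{z^{kn}\phi\}_{k\ge 0}$ in $\cl M$ that the products $\phi f_l$ of $\phi$ with the partial sums of $f\in H^2(z^n)$ form a Cauchy sequence in $\cl M$, identifies the limit with $\phi f$ (via an a.e.\ convergent subsequence rather than your H\"older estimate), and concludes $\phi H^2(z^n)\subseteq\cl M$ with $\|\phi f\|_{\cl M}=\|f\|_2$, whence $\phi\in L^{2q/(2-q)}$ because $\phi$ multiplies $L^2(z^n)$ boundedly into $L^q$. Your H\"older-duality computation of the $L^{2/(2-q)}$-norm of $\tfrac1n\sum_{l}|\phi(\alpha^l z)|^q$ in fact handles all $0<q\le 2$ at once, so the detour through Theorem~\ref{Redett} for $0<q<2$ is not needed (though it is harmless, provided you note that $U$ is injective because the inclusion $C=M_gU$ is, so that $\cl R(U)$ inherits all the structure of $\cl M$).

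The genuine gap is (iii), and you have located it correctly but it cannot be closed from the hypotheses as you have read them. The contractive containment gives only $\|\cdot\|_q\le\|\cdot\|_{\cl M}$, which bounds the ambient norm by the $\cl M$-norm --- the wrong direction for playing the blow-up of $\|h_l\|_2=\|\phi h_l\|_{\cl M}$ against a bounded quantity --- and indeed (iii) is false for a general $\cl M$ of this kind: for $E\subseteq\bb T$ with $0<m(E)<1$, the space $\cl M=\chi_E H^2(z^n)$ normed by $\|\chi_E f\|_{\cl M}=\|f\|_2$ is a Hilbert space contractively contained in $L^2$, simply invariant, $S^n$ acts isometrically on it, it satisfies (i) and (ii), and its wandering vector $\chi_E$ vanishes on $E^c$. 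What the paper's proof of (iii) actually uses is the additional standing hypothesis of Theorem B, namely that there exist $p\ge 2$ and $\delta>0$ with $\|f\|_{\cl M}\le\delta\|f\|_p$ for all $f\in\cl M\cap L^p$. That reversed inequality is precisely the missing device: running the exponential construction of Lemma~\ref{ortho}(v) one obtains
\[\|h_l\|_2=\|\phi h_l\|_{\cl M}\le \delta\|\phi h_l\|_p,\]
with the left side unbounded in $l$ and the right side bounded by the construction of $h_l$, which is the desired contradiction. So the repair is not a weak-limit argument internal to $\cl M$, but importing the $\delta$-hypothesis (available at the one place the lemma is invoked, inside the proof of Theorem B); note that your counterexample-free reading of the statement is shared by the paper, whose lemma omits this hypothesis even though its proof uses it.
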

\begin{proof}
By using the decomposition of isometries\cite[page 109]{Hof1}, we may write 
\[\cl M= \cl M_1\oplus \bigcap_{k \ge 0} S^{kn}\cl M\]
where $\cl M_1= \sum_{k=0}^{\infty}S^{kn}(\cl N).$
In view of the hypothesis that $\cl M$ is simply invariant, $\cl M_1 \neq 0.$
Consequently, we see that $\cl N\neq 0$ and hence choose an arbitrary element 
$\phi \in \cl N$ with $\|\phi\|_{\cl M}=1.$ Then $\{\phi z^{kn}\}_{k\ge0}$ is an orthonormal sequence in $\cl M.$\\
Let $f\in H^2(z^n)$ then $f=\sum_{k=0}^\infty\alpha_k z^{kn}$ and $f_l=\sum_{k=0}^l \alpha_k z^{kn}$ 
converges to f in $L^2.$
Note that 
\[ \|\phi f_l\|_{\cl M}^2 = \sum_{k=0}^l|\alpha_k|^2= \|f_l\|^2, \]
since  $\{\phi z^{kn}\}_{k\geq 0}$ is an orthonormal set in $\cl M. $\\
Thus, it follows that $\{\phi f_l\}_l $ is a Cauchy sequence in M which 
further implies that there exists $g \in \cl M$ such that 
$\phi f_l \longrightarrow g$  in $\cl M$,
and so in $L^2.$\\ Consequently, there exists a subsequence 
$ \{\phi f_{l_j}\}$ such that 
the sequence $\phi f_{l_j} \rightarrow g$  
and $ f_{l_j} \rightarrow  f$  almost everywhere.  
This shows that $\phi f = g $  a.e. and hence $\phi f \in \cl M.$ 
Thus, we can conclude that $\phi H^2(z^n) \subseteq \cl M$ and 
hence $\phi \in L^{2q/(2-q)}.$ Also, $\|\phi f\|_{\cl M}= \|f\|_2 \  \forall \ f \in H^2(z^n).$\\

\indent We now show that no element of $N$ can vanish on a set of positive measure unless 
it is zero. Let $\phi\in \cl N$ and suppose it is zero on a set $A$ of positive 
measure. Then as in Lemma ~\ref{ortho} we get an unbounded sequence 
$\{h_l\}$ in $H^{\infty}(z^n)$ such that the sequence $\{h_l\phi\}$ is in 
$\cl M\cap L^{\infty}$ and \[\|h_l\|_2=\|h_l\phi\|_{\cl M}\le \delta \|h_l\phi\|_p\]
By construction of $h_l,$ the right hand side is bounded by a constant 
independent of $l$ and the left hand side is unbounded. Thus we get a 
contradiction if $\phi$ is non zero. This completes the proof.
\end{proof}

\noindent{\bf{\em Proof of Theorem B.}} \ It follows from the above 
lemma and the hypotheses that $\cl M\cap L^p$ is a non-zero closed 
subspace of $L^p$ and is simply invariant under $ S^n$. Thus by Case (3) 
of Theorem A, there exists a set of orthonormal vectors 
$\{\phi_i\}_{i=1}^r$ in $(\overline{\cl M \cap L^p}^{L^2}) \ominus S^n( \overline{\cl M\cap L^p}^{L^2})$ 
such that $ r \leq n$ and \[\cl M\cap L^p = \sum_{i=1}^r\oplus \phi_i H^p(z^n)\oplus\cl K_{\cl M \bigcap L^p}^{\overline{\phi}}\] 
where $\cl K_{\cl M \cap L^p}^{\overline{\phi}}=\bigcap_{k \ge 0}S^{kn}(M\cap L^p).$ \\

\indent Assume for the moment that $\{\phi_i\}_{i=1}^r$ is a basis 
of $({\cl M\cap L^p}/{S^n(\cl M\cap L^p)})$ so that dim$({\cl M\cap L^p}/{S^n(\cl M\cap L^p)})=r.$ 
Then we claim that the dimension of $\cl N=\cl M\ominus S^n\cl M$ is at most $r$. \\ 
Let $\{ \psi_1,\psi_2,\cdots,\psi_r, \psi_{r+1}\}$ be
an orthonormal set in $\cl N \subseteq \cl M.$
It follows from the Lemma ~\ref{DB ortho}(i) that $\psi_i + S^n(\cl M\cap L^p)  \in {\cl M\cap L^p}/{S^n(\cl M\cap L^p)} \ \forall \ i.$\\
Suppose $\sum_{i=1}^{r+1}\alpha_i(\psi_i+ S^n(\cl M\cap L^p)) = S^n(\cl M\cap L^p).$ \\
Then $\alpha_1\psi_1+\cdots+\alpha_r\psi_r+\alpha_{r+1}\psi_{r+1} \in S^n(\cl M\cap L^p) \subseteq  S^n(\cl M)$  
which implies, $\alpha_1\psi_1+\cdots+\alpha_r\psi_r+\alpha_{r+1}\psi_{r+1} = 0.$ 
Thus $\alpha_i = 0, \  i=1,\cdots,r+1.$ 
This shows that $\{\psi_i + S^n(\cl M\cap  L^p)\}_{i=1}^{r+1} $ is a linearly
independent set in ${\cl M\cap L^p}/{ S^n(\cl M\cap L^p)},$ 
which is a contradiction to the assumption. Thus, the dimension of $\cl N \le r.$\\
 
\indent Finally, to show that $\{\phi_i\}_{i=1}^r$ is a basis of $({\cl M\cap L^p}/{S^n(\cl M \cap L^p)})$
we let $\sum_{i=1}^r \alpha_i(\phi_i+S^n(\cl M\cap  L^p)) = S^n(\cl M\cap  L^p).$  
This implies that $\sum_{i=1}^r \alpha_i \phi_i \in  S^n(\cl M\cap  L^p) 
\subseteq {S^n(\overline{\cl M\cap  L^p}^{L^2})}$ which further yields 
$\sum_{i=1}^r\alpha_i \phi_i=0.$ Since $\{\phi_i\}_{i=1}^r$ 
is a linearly independent set, therefore each $\alpha_i = 0.$ 
Thus, $\{\phi_i + S^n(\cl M\cap L^p)\}_{i=1}^r$ is a linearly
independent set in  $({\cl M\cap  L^p}/{S^n(\cl M\cap L^p)}).$ \\

Let $f \in \cl M\cap L^p,$ with $ f= \sum_{i=1}^r\phi_ih_i+g, $  
for some $ h_i\in H^2(z^n), \  g\in \cl K_{\cl M \cap L^p}^{\overline{\phi}}=\bigcap_{k \ge 0}S^{kn}(M\cap L^p).$   Then \[f+S^n(\cl M\cap L^p)=\sum_{i=1}^r\phi_ih_i+S^n(\cl M\cap L^p)\]
If we let $h_i = \sum_{k=0}^{\infty}\alpha_k^i z^{kn}$ then
$\phi_i(\sum_{k=0}^l\alpha_k^iz^{kn}) \rightarrow  \phi_ih_i $ in $L^p \ \forall \ i$, as $l\rightarrow\infty$, since $\phi_i \in  L^\infty \ \forall \ i.$
Note that \[ \phi_i h_i + \cl S^n(\cl M\cap  L^p) = \lim_l\alpha_0^i(\phi_i + S^n(\cl M\cap  L^p)) = \alpha_0^i(\phi_i +  S^n(\cl M\cap  L^p))\]
Thus, $f+{S}^n(\cl M\cap L^p) = \sum_{i=1}^r\alpha_0^i\phi_i+ S^n(\cl M\cap L^p).$ 
Hence $\{\phi_i+ S^n(\cl M\cap L^p)\}_{i=1}^r$ is a basis of 
$({\cl M\cap L^p}/{S^n(\cl M\cap L^p)}).$\\

Let $\{\psi_i \}_{i=1}^s$ with $ \ s\le r$ be
an orthonormal basis of $\cl N,$ then by using 
the decomposition of isometries \cite[page 109]{Hof1} we get that 
$$\cl M =\sum_{i=1}^s\oplus \psi_i H^2(z^n) \oplus \bigcap_{k\geq 0} S^{kn}(\cl M).$$
It follows from the Theorem~\ref{reducing1} that 
$\bigcap_{k\geq 0} S^{kn}(\cl M)= \cl R(M_{g}A_{\phi})$ 
where $\mu$ and $\phi$ have the desired properties. Hence, 
we obtain the required decomposition of $\cl M.$\\

Note that (ii) follows from the above lemma. 
Thus, it only remains to establish (iii). For the 
sake of notational simplicity, we prove (iii) for $n=2,$ 
the proof for general $n$ is identical. Fix an integer $i, \ 1\le i\le n$ and let $\psi_i=\psi_{i1}+z\psi_{i2}$ where $\psi_{ij} \in L^{\infty}(z^2).$ We shall divide the proof for (iii) in the following cases. \\

\noindent {\bf Case(1).} Suppose $p=2.$ Then the inequality 
\[\|h\|_2=\|\phi_ih\|_{\cl M}\le \delta\|\phi_ih\|_2\] 
implies that
\[\tfrac{1}{2\pi}\int_{\bb T}({\delta}^2(|\psi_{i1}|+|\psi_{i2}|)^2-1)|h|^2dm\ge 0\]
for all trigonometric polynomials $h,$ from which it follows that \[\|(|\psi_{i1}|+|\psi_{i2}|)^{-1}\|_{\infty}\le \delta\] and hence $|\psi_{i1}|+|\psi_{i2}|)^{-1}\in L^{\infty}.$\\

\noindent {\bf Case(2).} We now assume that $2< p < \frac{2q}{2-q}.$
For each positive integer $m,$ we define 
\[ E_m=\{z: |\psi_{i1}(z)|>\tfrac{1}{m} \ {\rm or}  \ |\psi_{i2}(z)|>\tfrac{1}{m}\}\]
and for a fixed real $r,$
\begin{equation*} 
k_m=\left\{\begin{array}{rl} r{\rm log}(|\psi_{i1}(z)|+|\psi_{i2}(z)|)  &  {\rm on} \ E_m\\
 0 & {\rm  on } \ E_m^c
\end{array} \right.
\end{equation*}
Then \[h_m={\rm exp}(k_m+i\tilde{k}_m)\] belongs to 
$H^{\infty}(z^2)$ where $\tilde{k}_m$ denote the harmonic 
conjugate of $k_m.$ \\

\noindent {\bf Subcase 2(a).} Suppose $2<p<\frac{2q}{2-q}.$ 
In this case we obtain the inequality
\begin{eqnarray*}
\left(\tfrac{1}{2\pi}\int_{E_m}(|\psi_{i1}|+|\psi_{i2}|)^{2r}dm\right)^{1/2}&\le & \|h_m\|_2\\
&=& \|\psi_ih_m\|_{\cl M}\\
&\le & \delta\left(\int_{\bb T}|\psi_i|^p|h_m|^p dm\right)^{1/p}\\
&\le &\delta\left(\int_{\bb T}|(\psi_{i1}|+|\psi_{i2}|)^p|h_m|^p dm\right)^{1/p}.
\end{eqnarray*}
Letting $m\rightarrow \infty$ in the above set of inequalities we obtain
\[\left(\tfrac{1}{2\pi}\int_{E_m}(|\psi_{i1}|+|\psi_{i2}|)^{2r}dm\right)^{1/2}\le \delta\left(\int_{\bb T}|(\psi_{i1}|+|\psi_{i2}|)^p|h_m|^p dm\right)^{1/p}.\]
Choosing $r=\tfrac{p}{2-p}<0$ we have $2r=p(1+r)=-s.$ Hence the 
above inequality yields
\[\left(\tfrac{1}{2\pi}\int_{\bb T}(|\psi_{i1}|+|\psi_{i2}|)^s\right)^{\tfrac{1}{2}-\tfrac{1}{p}}\le \delta,\]
that is, $(|\psi_{i1}|+|\psi_{i2}|)^{-1}\in L^{s},$ where 
$s=\tfrac{2p}{p-2}.$\\

\noindent {\bf Subcase 2(b).} Here we take $\ p=\frac{2q}{2-q}, \ q\ne 2.$ 
Then we can apply the same arguments as in the case $2<p<\frac{2q}{2-q},$ 
to obtain $(|\psi_{i1}|+|\psi_{i2}|)^{-1}\in L^{s},$ where $s=\tfrac{2p}{p-2}.\\$

\noindent {\bf Subcase 2(c).} Finally, we consider the case when 
$\ p=\frac{2q}{2-q}, \ q = 2.$ We find, upon taking the limits in 
the inequality \[\left(\tfrac{1}{2\pi}\int_{E_m}(|\psi_{i1}|+|\psi_{i2}|)^{2r}dm\right)^{1/2}\le \delta\|(|\psi_{i1}|+|\psi_{i2}|)h_m \|_{\infty}\]
 and by using the easily deduced fact that 
$\|(|\psi_{i1}|+|\psi_{i2}|)h_m\|_{\infty}\rightarrow 1$ as $m\rightarrow \infty,$ that 
\[|(\psi_{i1}|+|\psi_{i2}|)^{-1}\in L^2.\]

\noindent Lastly, to prove the final assertion in the theorem, note 
that $\bigcap_{k\ge0}S^{kn}(\cl M)$ is contractively contained in 
$L^2$ and $S^n$ acts as an unitary on it. If we now suppose 
$\bigcap_{l\ge0}S^{nl}(\cl M)\ne 0,$ then by corollary ~\ref{reducing2}, 
$\bigcap_{k\ge0}S^{kn}(\cl M \bigcap L^{\infty})\ne 0,$ which further 
implies that $\bigcap_{k\ge0}S^{kn}(\cl M\cap L^p)\ne 0.$ But this 
cannot happen by case (3) of Theorem A unless $s < n.$ 
This completes the proof of the theorem.\\

The following result comes as a consequence of the above theorem which asserts that, 
under the same conditions as in the above theorem, there are no non-trivial simply 
invariant subspaces contractively contained in $L^r$ for some $r > 2.$ 
We prove this result by using the same arguments as used by Paulsen and Singh 
to prove their corollary 5.2 in \cite{PS}. 

\begin{cor}
Let $\cl M$ be a simply invariant Hilbert space boundedly
contained in $L^r$ for some $r > 2.$ Suppose that $S^n$ acts isometrically 
on $\cl M$ and that $\cl M$ satisfies the condition that there exists $2 \le p \le \infty$
and $\delta >0 $ such that
$\|f\|_{\cl M} \le \delta \|f\|_p \  \forall \ f \in \cl M\cap L^p.$
Then $\cl M=\{0\}.$
\end{cor}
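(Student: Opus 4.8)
The plan is to reduce everything to bounded containment in $L^2$, which is already understood, and then to show that the shift part of $\cl M$---which is forced to be nonzero by simple invariance---cannot survive inside $L^r$ for $r>2$. First I would note that since $r>2$ we have $\|f\|_2\le\|f\|_r\le C\|f\|_{\cl M}$ for all $f\in\cl M$, so $\cl M$ is boundedly---and, after rescaling the inner product of $\cl M$ by the containment constant, contractively---contained in $L^2$; this rescaling changes neither the simple invariance, nor the isometry of $S^n$, nor the hypothesis $\|f\|_{\cl M}\le\delta\|f\|_p$ (only $\delta$ is enlarged). Since $\tfrac{2q}{2-q}=\infty$ for $q=2$, the constraint $2\le p\le\tfrac{2q}{2-q}$ holds, so I may invoke Lemma~\ref{DB ortho} with $q=2$: writing $\cl N=\cl M\ominus S^n\cl M$, simple invariance forces $\cl N\ne 0$, and every unit vector $\psi\in\cl N$ lies in $L^{2q/(2-q)}=L^\infty$, does not vanish on a set of positive measure, and satisfies $\|\psi f\|_{\cl M}=\|f\|_2$ for all $f\in H^2(z^n)$, whence $\psi H^2(z^n)\subseteq\cl M$.

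I would then fix such a $\psi$, write $\psi=\sum_{j=1}^n z^{j-1}\psi_j$ with $\psi_j\in L^\infty(z^n)$, and set $u=(\sum_{j=1}^n|\psi_j|^2)^{1/2}$, so that $u\in L^\infty(z^n)$ and $u>0$ a.e. (as $\psi\ne 0$). Combining the bounded containment $\cl M\hookrightarrow L^r$ with the isometry gives
\[\|\psi g\|_r\le C\,\|\psi g\|_{\cl M}=C\,\|g\|_2\qquad(g\in H^2(z^n)),\]
so $\psi$ multiplies $H^2(z^n)$ boundedly into $L^r$. The key step is to upgrade this to a statement about $u$. Since $g\in H^2(z^n)$ is a function of $z^n$, it is invariant under $z\mapsto\alpha z$ for $\alpha=e^{2\pi i/n}$, and the orthogonality of the characters $k\mapsto\alpha^{k(j-1)}$ yields the averaging identity
\[\tfrac1n\sum_{k=0}^{n-1}|\psi g|^2(\alpha^k z)=|g(z)|^2\,\sum_{j=1}^n|\psi_j(z)|^2=|g(z)|^2u(z)^2.\]
Applying the power-mean inequality (valid since $r\ge 2$) to the orbit $\{|\psi g|(\alpha^k z)\}_k$ and then integrating, using the rotation invariance of $m$, I obtain $\|ug\|_r\le\|\psi g\|_r\le C\|g\|_2$ for every $g\in H^2(z^n)$.

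It then remains to convert this into a genuine multiplier statement and reach a contradiction. Taking $g$ to be outer in $H^2(z^n)$ lets me prescribe its modulus, so $\|uw\|_r\le C\|w\|_2$ for every nonnegative $w\in L^2(z^n)$ with $\log w\in L^1$; approximating an arbitrary nonnegative $W\in L^2(z^n)$ by $\max(W,\varepsilon)$ (for which $\log\in L^1$) and applying Fatou's lemma on the left removes the last constraint, so that $M_u$ maps $L^2(z^n)$ boundedly into $L^r$. Transporting this through the measure-preserving isometry $L^2(z^n)\cong L^2$, $z^n\mapsto z$, it asserts that the nonzero function $u\in L^\infty$ multiplies $L^2$ into $L^r$ with $r>2$---which is impossible by the same fact used in the proof of the preceding theorem, namely that no nonzero $L^\infty$ function multiplies $L^2$ into $L^r$ for $r>2$. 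This contradiction shows $\cl N=0$, contradicting $\cl M\ne 0$; hence $\cl M=\{0\}$. I expect the averaging-and-density bridge of the last two paragraphs---passing from ``$\psi$ multiplies $H^2(z^n)$ into $L^r$'' to ``$u$ multiplies all of $L^2(z^n)$ into $L^r$''---to be the main obstacle, after which the classical no-multiplier fact closes the argument. Note that this route never needs any comparison between $p$ and $r$; the exponent $p$ enters only through Lemma~\ref{DB ortho} to guarantee that $\psi$ is bounded and non-degenerate.
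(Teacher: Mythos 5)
Your argument is correct and follows essentially the same route as the paper's proof: reduce to contractive containment in $L^2$, invoke Lemma~\ref{DB ortho} to extract a nonzero $\psi\in\cl N\cap L^{\infty}$ with $\psi H^2(z^n)\subseteq\cl M\subseteq L^r$, and contradict the impossibility of a nonzero bounded function multiplying $H^2(z^n)$ boundedly into $L^r$ for $r>2$. The only difference is one of detail: the paper asserts that impossibility in a single sentence, while you justify it via the rotation-averaging identity and outer functions, reducing it to the same $L^2\to L^r$ no-multiplier fact the paper already uses in its $q>2$ theorem.
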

\begin{proof}
Assume that $\cl M \neq 0.$ Then $\cl M$ is a non zero simply invariant 
Hilbert space contractively contained in $L^2$ since $L^r \subset L^2.$
Also, it satisfies the condition as in the above theorem by the hypotheses 
and thus we conclude that there exists $\phi \in L^{\infty}$ such that $\phi H^2 \subseteq \cl M.$
However, for each $\phi \in L^{\infty}$ there exists $f \in H^2$ such that 
$ \phi f \not\in L^r,$ thus a contradiction. This completes the proof of the result.
\end{proof}
{\em Acknowlegements: The authors thank Prof. Vern Paulsen for valuable discussions. The third author expresses appreciation to the 
Department of Mathematics, University of Houston, for support where a part of this paper was written. He also acknowledges the 
facilities provided by the Mathematical Sciences Foundation.}

\end{document}